\title{Critical points of random branched coverings of the Riemann sphere}
\author{Michele Ancona \thanks{Institut Camille Jordan, Umr Cnrs 5208, Universit\'{e} Claude Bernard Lyon 1. ancona@math.univ-lyon1.fr} }
\date{}
\theoremstyle{plain}
\newtheorem{thm}{Theorem}[section]
\newtheorem{prop}[thm]{Proposition}
\newtheorem{oss}[thm]{Remark}
\theoremstyle{definition}
\newtheorem{defn}[thm]{Definition}
\newcommand{\R}{\mathbb{R}}
\newcommand{\C}{\mathbb{C}}
\newcommand{\Crit}{\textrm{Crit}}
\newcommand{\Vol}{\textrm{Vol}}
\newcommand{\Pic}{\textrm{Pic}}
\newcommand{\dH}{\textrm{dH}}
\newcommand{\M}{\mathcal{M}}
\begin{document}
\maketitle
\begin{abstract}
Given a closed Riemann surface $\Sigma$ equipped with a volume form $\omega$, we construct a  natural probability measure on the space $\mathcal{M}_d(\Sigma)$ of degree $d$ branched coverings from $\Sigma$ to the Riemann sphere $\C\mathbb{P}^1.$  We prove a large deviations principle for the number of critical points in a given open set $U\subset \Sigma$: given any sequence $\epsilon_d$ of positive numbers, the probability that the number of critical points of a branched covering deviates from $2d\cdot\Vol(U)$ more than $\epsilon_d\cdot d$ is smaller than $\exp(-C_U\epsilon^3_d d)$, for some positive constant $C_U$. In particular, the probability that a covering does not have any critical point in a given open set goes  to zero exponential fast with the degree.
\end{abstract}

\section*{Introduction}
This paper is concerned with the branched coverings $u:\Sigma\rightarrow\C\mathbb{P}^1$ of very large degree from a closed Riemann surface $\Sigma$ to the Riemann sphere.
By the Riemann-Hurwitz formula, the number   of critical  points of such maps, counted with multiplicity,   equals $\#\Crit(u)=2d+2g-2$, where $g$ denotes the genus of $\Sigma$ and $d$  is the degree of the map.
\begin{center}\emph{ How do  these $2d+2g-2$ critical points distribute on $\Sigma$, if we pick   $u:\Sigma\rightarrow \C \mathbb{P}^1$ at random?}
\end{center}
In order to answer the question, we first construct a  probability measure on the space $\mathcal{M}_d(\Sigma)$ of  degree $d$ branched coverings $u:\Sigma\rightarrow \C \mathbb{P}^1$.
This probability measure is denoted by  $\mu_d$ and it is associated with a volume form $\omega$ on $\Sigma$ of total mass $1$ (that is $\int_{\Sigma}\omega=1$), which is fixed once for all. 
Later in the introduction we will sketch the construction of the measure $\mu_d$, which we will give in details in Section \ref{sectproba}.\\
The distribution of the critical points of a map $u\in\mathcal{M}_d(\Sigma)$ is encoded by the associated empirical measure which we renormalize by $2d+2g-2$, so that  its mass does  not  depend on $d\in\mathbb{N}^*$. More precisely, for any degree $d$  branched coverings $u\in\mathcal{M}_d(\Sigma)$, we consider the probability measure $T_u$ on $\Sigma$  defined by $$T_u=\frac{1}{2d+2g-2}\displaystyle\sum_{x\in\Crit(u)}\delta_x$$ where $\delta_x$ stands for the Dirac measure at $x$.
The central object of the paper is then  the random variable 
$u\in (\mathcal{M}_d(\Sigma),\mu_d)\mapsto T_u\in\textrm{Prob}(\Sigma)$ which takes values in the space $\textrm{Prob}(\Sigma)$ of probabilities on $\Sigma$. 
The expected value $\mathbb{E}[T_u]$ of $T_u$ converges in the weak topology to the volume form $\omega$ of $\Sigma$, see Theorem \ref{expected}. It means that, for any continuous function $f:\Sigma\rightarrow\R$, one has $$\displaystyle\mathbb{E}_d[T_u(f)]\doteqdot\int_{u\in\mathcal{M}_d(\Sigma)}T_u(f)\textrm{d}\mu_d(u)\xrightarrow[d \rightarrow \infty]{} \int_{\Sigma}f\omega.$$
The main theorem of the paper is the following large deviations estimate for the random variable $T_u$.
\begin{thm}\label{smooth} Let $\Sigma$ be a closed Riemann surface equipped with a  volume form $\omega$ of mass $1$. For any smooth function  $f\in \mathcal{C}^{\infty}(\Sigma,\R)$ and any sequence $\epsilon_d$ of positive real numbers of the form $\epsilon_d=O(d^{-a})$, for some $a\in [0,1)$, there exists a positive constant $C$ such that the following inequality $$\mu_d\bigg\{u\in\M_d(\Sigma), \big|T_u(f)-\int_{\Sigma}f\omega\big|\geq\epsilon_d \bigg\}\leq \exp(-C\epsilon_dd)$$
holds.
\end{thm}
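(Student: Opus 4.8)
The plan is to use the Lelong--Poincar\'e formula to rewrite $T_u(f)-\int_\Sigma f\omega$ as a logarithmic integral of a single holomorphic section canonically attached to $u$, and then to control that integral by a Laplace transform estimate.

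Recall (Section \ref{sectproba}) that $u=[s_0:s_1]\in\M_d(\Sigma)$ is given by a pair $(s_0,s_1)$ of holomorphic sections, without common zero, of a degree $d$ line bundle $L\to\Sigma$ carrying a Hermitian metric $h$ of curvature $d\omega$, and that $\mu_d$ is the pushforward under projectivization of the standard Gaussian measure on $H^0(\Sigma,L)^{\oplus 2}$ for the $L^2$ product defined by $h$ and $\omega$; since the estimates below are uniform over the compact space $\Pic^d(\Sigma)$, we may fix $L$ and argue conditionally on it. In local holomorphic frames the expression $s_0\,\partial s_1-s_1\,\partial s_0$ is connection-independent and defines the Wronskian section $W(s_0,s_1)\in H^0(\Sigma,M)$, $M:=L^{\otimes 2}\otimes K_\Sigma$, whose zero divisor, counted with multiplicities, is exactly $\Crit(u)$; note $\deg M=2d+2g-2$, in agreement with Riemann--Hurwitz. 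Fixing on $K_\Sigma$ a metric of curvature $(2g-2)\omega$ and on $M$ the induced metric $h_M$, so that $c_1(M,h_M)=(2d+2g-2)\omega$, the Lelong--Poincar\'e formula gives $(2d+2g-2)T_u=(2d+2g-2)\omega+\tfrac{i}{\pi}\partial\bar\partial\log\|W(s_0,s_1)\|_{h_M}$. Writing $\tfrac{i}{\pi}\partial\bar\partial f=\phi_f\,\omega$ with $\int_\Sigma\phi_f\,\omega=0$ and $\|\phi_f\|_\infty\le C_\Sigma\|f\|_{\mathcal C^2}$, and integrating by parts (legitimate since $\log\|W\|_{h_M}\in L^1(\Sigma)$),
$$T_u(f)-\int_\Sigma f\,\omega=\frac{1}{2d+2g-2}\int_\Sigma\log\|W(s_0,s_1)\|_{h_M}\,\phi_f\,\omega .$$
So it suffices to prove $\mathbb{P}\big(\int_\Sigma|\log\|W(s_0,s_1)\|_{h_M}|\,\omega\ge U\big)\le e^{-cU}$ for $U\ge C_0\log d$, and then to take $U=(2d+2g-2)\epsilon_d/\|\phi_f\|_\infty$, which exceeds $C_0\log d$ for $d$ large since $\epsilon_d\,d\to\infty$; this yields $\exp(-C\epsilon_d d)$ with $C=c/\|\phi_f\|_\infty$.

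The positive part of $\log\|W\|_{h_M}$ is harmless: by Jensen's inequality (as $\omega$ is a probability measure) $\int_\Sigma(\log\|W\|_{h_M})_+\,\omega\le\tfrac12\log\big(1+\|W(s_0,s_1)\|_{L^2}^2\big)$, and $\|W(s_0,s_1)\|_{L^2}^2$ is a quartic polynomial in the Gaussian vector $(s_0,s_1)$ with expectation $\asymp d^3$ (by the on-diagonal Bergman asymptotics for $L$ and its $1$-jet); Gaussian hypercontractivity then gives $\mathbb{P}(\|W\|_{L^2}^2\ge t)\le\exp(-c(t/d^3)^{1/2})$, which for $t=e^{U/2}$ is far smaller than $e^{-cU}$ in the relevant range of $U$. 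For the negative part one bounds, for $0\le\lambda<1$, the Laplace transform
$$\mathbb{E}\Big[\exp\Big(-\lambda\int_\Sigma\log\|W(s_0,s_1)\|_{h_M}^2\,\omega\Big)\Big]\ \le\ \int_\Sigma\mathbb{E}\big[\|W(s_0,s_1)(x)\|_{h_M}^{-2\lambda}\big]\,\omega(x),$$
again by Jensen. One computes the inner expectation by conditioning on $s_0$: since $s_1\mapsto W(s_0,s_1)(x)$ is $\C$-linear, $W(s_0,s_1)(x)\mid s_0$ is a centred complex Gaussian of variance $B_{s_0}(x):=\mathbb{E}_{s_1}\|W(s_0,s_1)(x)\|_{h_M}^2$, whence $\mathbb{E}\big[\|W(x)\|_{h_M}^{-2\lambda}\mid s_0\big]=\Gamma(1-\lambda)\,B_{s_0}(x)^{-\lambda}$. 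Now the key point: because the value and the first derivative of a holomorphic section of $L$ at a point are uniformly nondegenerately correlated (the normalized correlation stays bounded away from $1$, uniformly in $x$ and in $L\in\Pic^d(\Sigma)$), one gets $B_{s_0}(x)\ge c\,d^2\,\|s_0(x)\|_h^2$; and $\|s_0(x)\|_h^2$ is itself a centred complex Gaussian of variance $B_L(x)\ge c'd$, so $\mathbb{E}_{s_0}[B_{s_0}(x)^{-\lambda}]\le\Gamma(1-\lambda)\,(c\,d^2)^{-\lambda}(c'd)^{-\lambda}$. Hence the Laplace transform is $\le\Gamma(1-\lambda)^2\,C^{-\lambda}d^{-3\lambda}$, and Markov's inequality with $\lambda=1-\tfrac1{2U}$ gives $\mathbb{P}\big(-\int_\Sigma\log\|W\|_{h_M}^2\,\omega\ge U\big)\lesssim U^2e^{-U}$. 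Combining the two parts through the identity $\int_\Sigma|\log\|W\|_{h_M}|\,\omega=2\int_\Sigma(\log\|W\|_{h_M})_+\,\omega-\int_\Sigma\log\|W\|_{h_M}\,\omega$ yields the required tail bound, hence the theorem.

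The main obstacle is the uniform lower bound $B_{s_0}(x)\ge c\,d^2\,\|s_0(x)\|_h^2$: it amounts to a quantitative, uniform-in-$x$ (and in $L\in\Pic^d(\Sigma)$) nondegeneracy statement for the Bergman kernel of $L$ together with its first derivatives, i.e.\ that the $1$-jet of a random section is nondegenerate at the natural scale $d^{-1/2}$ (in the rescaled limit one recovers the Bargmann--Fock model, in which the value and the derivative at a point are independent, so the correlation is bounded away from $1$). This requires effective, uniform Bergman-kernel asymptotics, or a rescaling/compactness argument reducing to the model. The remaining ingredients — the precise form of the Lelong--Poincar\'e identity with the exact curvature normalization, the validity of the integration by parts across the finitely many $-\infty$ singularities of $\log\|W\|_{h_M}$, and the fourth-moment (degree-$4$ chaos) tail estimate — are standard.
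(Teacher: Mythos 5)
Your proposal is correct in outline and shares the paper's overall architecture: pass to the Wronskian $W_{\alpha\beta}\in H^0(\Sigma,\mathcal{F}^{2d}\otimes\mathcal{E}^2\otimes T^*_\Sigma)$, apply the Poincar\'e--Lelong formula, work fiberwise over $\Pic^d(\Sigma)$ with the Gaussian measure (legitimate since the deviation event is a cone, cf.\ Proposition \ref{vs}), and reduce Theorem \ref{smooth} to an exponential tail bound for $\int_\Sigma|\log\|W_{\alpha\beta}\||\,\omega$ at scale $\epsilon_d d$ --- which is exactly Proposition \ref{fondame}. Where you genuinely diverge is in the proof of that key estimate. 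The paper controls $\log^+\|W\|$ by a sup-norm bound coming from Bergman-kernel estimates and coefficient tails (Proposition \ref{above}), and controls $\log^-\|W\|$ by combining a pointwise small-ball estimate via peak sections (Proposition \ref{below}) with a subharmonicity/Poisson-kernel argument on circles, covering $\Sigma$ by annuli in the spirit of \cite{over}, which upgrades the pointwise bound to an $L^1$ bound off an exceptional set. You instead handle the positive part by Jensen plus a hypercontractive tail for the degree-$4$ chaos $\|W\|_{L^2}^2$ (weaker than the paper's sup-norm bound, but sufficient), and the negative part by a Laplace-transform/negative-moment computation: conditioning on $s_0$, the map $s_1\mapsto W(s_0,s_1)(x)$ is linear, so $W(x)$ given $s_0$ is a scalar complex Gaussian and $\mathbb{E}[\|W(x)\|^{-2\lambda}\mid s_0]=\Gamma(1-\lambda)B_{s_0}(x)^{-\lambda}$, after which Markov with $\lambda=1-\tfrac{1}{2U}$ gives the tail $U^2e^{-U}$. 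This avoids the annuli/exceptional-set bookkeeping entirely, gives an essentially sharp constant in the exponent, and shows that --- despite the quadratic nature of the critical-point equation emphasized in the introduction --- this particular tail bound can be reduced to Gaussian computations by conditioning. Moreover, the point you flag as the main obstacle, the uniform lower bound $B_{s_0}(x)\geq c\,d^2\,\|s_0(x)\|_h^2$, is in fact immediate from the paper's Proposition \ref{peak}: expanding $s_1=b_0\sigma_0+b_1\sigma_1+\tau'$ in the peak-section basis at $x$ (with $\sigma_1(x)=0$, $\nabla\tau'(x)=\tau'(x)=0$), the conditional variance is $\|s_0(x)\otimes\nabla\sigma_0(x)-\sigma_0(x)\otimes\nabla s_0(x)\|^2+\|s_0(x)\|^2\|\nabla\sigma_1(x)\|^2\geq\|s_0(x)\|^2\|\nabla\sigma_1(x)\|^2\sim\tfrac{d^2}{\pi}\|s_0(x)\|^2$, and $\|s_0(x)\|^2$ is exponential with mean $\mathcal{K}_{\mathcal{F}^d\otimes\mathcal{E}}(x,x)\sim d/\pi$; the uniformity in $x$ and in $\mathcal{E}\in\Pic^0(\Sigma)$ is the same uniform Bergman asymptotics the paper already invokes, so no rescaling to the Bargmann--Fock model is needed. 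Two minor caveats, both shared with the paper's own proof: your argument requires $\epsilon_d d\gg\log d$ (the paper uses the analogous condition implicitly when it sums $q_d\sim\epsilon_d^{-4}$ exceptional events in Step 2 of Proposition \ref{fondame}), and one should say explicitly that $s_0(x)\neq0$ almost surely so the conditional negative moments are finite for $\lambda<1$.
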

The key point in the proof of Theorem \ref{smooth} is a large deviations estimate for the $L^1$-norm of the random variable $u\in\mathcal{M}_d(\Sigma)\mapsto |\log \norm{du}|$, see Proposition \ref{fondame}. This estimate is obtained  by combining  H\"ormander peak sections  and some properties of subharmonic functions.\\
It turns out that the constant $C$ in Theorem \ref{smooth} is of the form $\frac{C'}{\norm{\partial\bar{\partial}f}_{\infty}}$, where $C'$ is a constant which does not depend on $f$, but  only on the sequence $\epsilon_d$ (and on $\Sigma$).\\
One of the consequence of Theorem \ref{smooth} is the following large deviations estimate for overcrowding and undercrowding of critical points in a given open set $U\subset\Sigma$.
\begin{thm}\label{number} Let $\Sigma$ be a closed Riemann surface equipped with a  volume form $\omega$ of mass $1$. For any open subset $U\subset \Sigma$ with  $\mathcal{C}^2$ boundary, there exists a positive constant $C_U$ such that, for  any sequence $\epsilon_d\in\R_+$  of the form $\epsilon_d=O(d^{-a})$, for some $a\in [0,1)$, the following inequality $$\mu_d\bigg\{u\in\M_d(\Sigma), \big|\frac{1}{2d}\#(\Crit(u)\cap U)-\Vol(U)\big|\geq\epsilon_d \bigg\}\leq \exp(-C_U\epsilon_d^3 d)$$
holds.
\end{thm}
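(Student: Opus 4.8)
\emph{Plan of proof.} The plan is to deduce Theorem~\ref{number} from Theorem~\ref{smooth} by testing the empirical measure $T_u$ against smooth approximations of the indicator function $\mathds{1}_U$ of $U$. From the definition of $T_u$ one has $T_u(\mathds{1}_U)=\frac{1}{2d+2g-2}\#(\Crit(u)\cap U)$, hence
$$\frac{1}{2d}\#(\Crit(u)\cap U)=\Big(1+\frac{g-1}{d}\Big)T_u(\mathds{1}_U),\qquad \Vol(U)=\int_\Sigma\mathds{1}_U\,\omega ,$$
so that, up to the $O(1/d)$ factor $1+\frac{g-1}{d}$, the statement is the $f=\mathds{1}_U$ case of Theorem~\ref{smooth}. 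The catch is that $\mathds{1}_U$ is not smooth, and smoothing it will cost extra powers of $\epsilon_d$.

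\emph{Choice of the smoothing scale.} For a small parameter $\eta>0$ I would take $f^\eta_-\le\mathds{1}_U\le f^\eta_+$ in $\mathcal{C}^\infty(\Sigma,[0,1])$, equal to $1$ (resp.\ $0$) away from $\partial U$ and transitioning, at scale $\eta$, across the tube $U_\eta:=\{x\in\Sigma:\ \mathrm{dist}(x,\partial U)<\eta\}$. Since $\partial U$ is $\mathcal{C}^2$, for $\eta$ small the tubular neighbourhood theorem gives $\int_\Sigma(f^\eta_+-f^\eta_-)\,\omega\le\Vol(U_\eta)\le C_U\eta$, while (the signed distance to $\partial U$ being $\mathcal{C}^2$ near $\partial U$) one has $\norm{\partial\bar\partial f^\eta_\pm}_\infty\le C_U/\eta^2$, along with the harmless bounds $\norm{f^\eta_\pm}_\infty\le1$ and $\norm{df^\eta_\pm}_\infty\le C_U/\eta$. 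Applying Theorem~\ref{smooth} to $f=f^\eta_\pm$ with threshold $\epsilon_d/3$ --- still $O(d^{-a})$, hence admissible --- and using that its constant, of the form $C'/\norm{\partial\bar\partial f^\eta_\pm}_\infty$, is $\ge c\,\eta^2$ with $C',c>0$ independent of $f$ (and of $d$), one obtains
$$\mu_d\Big\{\big|T_u(f^\eta_\pm)-\int_\Sigma f^\eta_\pm\,\omega\big|\ge\tfrac{\epsilon_d}{3}\Big\}\ \le\ \exp\big(-c\,\eta^2\,\epsilon_d\,d\big).$$
The choice $\eta:=\epsilon_d/(3C_U)$ then makes the bias $\big|\int_\Sigma f^\eta_\pm\,\omega-\Vol(U)\big|\le C_U\eta\le\epsilon_d/3$ and turns the exponent into $-C_U\,\epsilon_d^3\,d$; this balancing is where the cube in the statement comes from.

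\emph{Assembling, and the main obstacle.} Outside the union of the (at most two) exceptional events above, positivity of $T_u$ and the sandwich $f^\eta_-\le\mathds{1}_U\le f^\eta_+$ give $\int_\Sigma f^\eta_-\,\omega-\tfrac{\epsilon_d}{3}\le T_u(\mathds{1}_U)\le\int_\Sigma f^\eta_+\,\omega+\tfrac{\epsilon_d}{3}$; combining with the bias bound and with the factor $1+\frac{g-1}{d}$ (an extra $O(1/d)$, of lower order than $\epsilon_d$ in the regime $\epsilon_d^3 d\to\infty$ where the estimate has content) forces $\big|\frac{1}{2d}\#(\Crit(u)\cap U)-\Vol(U)\big|<\epsilon_d$, and a union bound gives the asserted estimate after adjusting $C_U$. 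The one genuine difficulty is the one just flagged: Theorem~\ref{smooth} degrades like $\norm{\partial\bar\partial f}_\infty^{-1}$, so a truly rough observable such as $\mathds{1}_U$ can only be resolved at scale $\eta\sim\epsilon_d$, which is exactly what produces the two extra powers of $\epsilon_d$ --- optimizing this trade-off is the crux. One should also check that Theorem~\ref{smooth} may legitimately be applied with test functions $f^\eta_\pm$ varying with $d$; this is fine here since its constant has the explicit shape $C'/\norm{\partial\bar\partial f}_\infty$ with $C'$ independent of $f$, and $\norm{f^\eta_\pm}_\infty$, $\norm{df^\eta_\pm}_\infty$ remain polynomially bounded in $1/\epsilon_d$. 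Everything else --- the $\mathcal{C}^2$ tube estimate, the genus correction, the union bound --- is routine.
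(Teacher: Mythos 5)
Your proposal is correct and follows essentially the same route as the paper: sandwich $\mathds{1}_U$ between smooth functions varying at scale $\eta\sim\epsilon_d$, so that $\norm{\partial\bar{\partial}f^\eta_\pm}_\infty=O(\epsilon_d^{-2})$, invoke Theorem~\ref{smooth} with its constant of the form $C'/\norm{\partial\bar{\partial}f}_\infty$ (Remark~\ref{constant} in the paper), and conclude by a union bound, which is exactly how the cube $\epsilon_d^3$ arises there. Your explicit treatment of the normalization factor $2d+2g-2$ versus $2d$ is a small point the paper glosses over, but it does not constitute a different argument.
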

Theorem \ref{number} follows from Theorem \ref{smooth} by taking, as test functions, two sequences of functions $\psi^+_d$ and $\psi^-_d$ which approximate  from above and below, in an appropriate way, the characteristic function $\mathds{1}_U$ of the set $U$.\\
Another consequence of Theorem \ref{smooth} is the following hole probabilities result for critical points of random branched coverings.
\begin{thm}\label{hole} Let $\Sigma$ be a closed Riemann surface equipped with a  volume form $\omega$ of mass $1$. For every open subset $U\subset \Sigma$ there exists $C_U>0$ such that $$\mu_d\bigg\{u\in\M_d(\Sigma), \Crit(u)\cap U=\emptyset\bigg\}\leq \exp(-C_Ud).$$
\end{thm}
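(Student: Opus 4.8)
The plan is to obtain Theorem~\ref{hole} as an immediate corollary of Theorem~\ref{smooth} (equivalently, of Theorem~\ref{number}), so the ``proof'' reduces to a judicious choice of test function. We may assume $U\neq\emptyset$, the statement being vacuous otherwise. First I would choose $f\in\mathcal{C}^{\infty}(\Sigma,\R)$ with $f\geq 0$, $\Supp f\subset U$ and $c:=\int_{\Sigma}f\,\omega>0$; such an $f$ exists because $U$ is open and $\omega$ is a positive volume form, so any bump function supported in a small coordinate ball $B$ with $\overline{B}\subset U$ and not identically zero has strictly positive $\omega$-integral.

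The key elementary point is that the hole event forces $T_u(f)$ to vanish: if $\Crit(u)\cap U=\emptyset$, then every critical point of $u$ lies outside $\Supp f$, hence
$$T_u(f)=\frac{1}{2d+2g-2}\sum_{x\in\Crit(u)}f(x)=0,$$
and therefore $\big|T_u(f)-\int_{\Sigma}f\,\omega\big|=c$. Taking $\epsilon_d\equiv c$, which is of the admissible form $O(d^{-a})$ with $a=0\in[0,1)$, one obtains the inclusion of events
$$\big\{u\in\M_d(\Sigma):\Crit(u)\cap U=\emptyset\big\}\ \subset\ \Big\{u\in\M_d(\Sigma):\big|T_u(f)-\textstyle\int_{\Sigma}f\,\omega\big|\geq\epsilon_d\Big\}.$$
Applying Theorem~\ref{smooth} to this $f$ and this sequence then yields a constant $C>0$ such that the $\mu_d$-measure of the right-hand event is at most $\exp(-Cc\,d)$, and setting $C_U:=Cc>0$ completes the argument. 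Alternatively, one may pick an open $V$ with $\mathcal{C}^2$ boundary and $\overline{V}\subset U$, observe that $\Crit(u)\cap U=\emptyset$ implies $\#(\Crit(u)\cap V)=0$ and hence $\big|\tfrac{1}{2d}\#(\Crit(u)\cap V)-\Vol(V)\big|=\Vol(V)$, and apply Theorem~\ref{number} with $\epsilon_d\equiv\Vol(V)$, obtaining $C_U=C_V\Vol(V)^3$.

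There is essentially no obstacle here: all the analytic content already sits in Proposition~\ref{fondame} and Theorem~\ref{smooth}. The only thing to check carefully is that a constant sequence $\epsilon_d\equiv c$ is admissible in Theorem~\ref{smooth} — it is, being $O(d^{-a})$ with $a=0$ — so that the theorem does furnish a finite positive $C$ for this particular $f$ and this particular sequence. It is also worth recording that the same $f$ works for every open set containing the chosen ball $B$, so that $C_U$ may in fact be taken to depend only on an embedded ball contained in $U$ and on its $\omega$-volume.
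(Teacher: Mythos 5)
Your proof is correct and takes essentially the same route as the paper: Theorem \ref{hole} is obtained as a corollary of Theorem \ref{smooth} applied with a constant sequence $\epsilon_d\equiv\epsilon$ (admissible since $a=0$ is allowed) to a fixed smooth test function adapted to $U$, which is exactly why no boundary regularity on $U$ is required. The only difference is cosmetic: you use a single nonnegative bump $f$ supported in $U$ with $\epsilon=\int_\Sigma f\,\omega$ (so only the lower-deviation half of the estimate is invoked, and the hole event gives $T_u(f)=0$ hence deviation exactly $\epsilon$, covered by the non-strict inequality in Theorem \ref{smooth}), whereas the paper sandwiches $\mathds{1}_U$ between two smooth functions $\psi^{\pm}$ as in the proof of Theorem \ref{number} and then takes $\epsilon=\Vol(U)$.
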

Remark that Theorem \ref{hole} is not a formal consequence of Theorem \ref{number} for the constant sequence $\epsilon_d\equiv\Vol(U)$. Indeed, in Theorem \ref{hole}, we do not require any regularity on the boundary of  $U$. \\

Let us briefly describe the construction of the probability measure $\mu_d$, which will be given in details in  Section \ref{sectproba}.
 The fundamental remark is that the space  $\mathcal{M}_d(\Sigma)$ is fibered over the space $\Pic^d(\Sigma)$ of degree $d$ line bundles on $\Sigma$. 
 Indeed, there is a  natural map from  $\mathcal{M}_d(\Sigma)$ to  $\Pic^d(\Sigma)$  which maps every morphism $u$ to the line bundle $u^*\mathcal{O}(1)$.  The fiber  of this map over $\mathcal{L}\in\Pic^d(\Sigma)$ is denoted by $\M_d(\Sigma,\mathcal{L})$. It is an open dense subset of $\mathbb{P}(H^0(\Sigma,\mathcal{L})^2)$ given by (the class of) pairs of global sections without common zeros.  In order to construct a probability measure on $\mathcal{M}_d(\Sigma)$, we produce a family of probability measures $\{\mu_{\mathcal{L}}\}_{\mathcal{L}\in\Pic^d(\Sigma)}$ on each space  $\mathbb{P}(H^0(\Sigma,\mathcal{L})^2)$ which restricts to a probability measure on $\M_d(\Sigma,\mathcal{L})$, still denoted by $\mu_{\mathcal{L}}$.  The probability measure $\mu_{\mathcal{L}}$ on $\mathbb{P}(H^0(\Sigma,\mathcal{L})^2)$ is the measure induced by the Fubini-Study metric associated with a Hermitian product on $H^0(\Sigma,\mathcal{L})^2$. This Hermitian product is a natural $L^2$-product induced by $\omega$, see Section \ref{bergman}. This family of measures, together with the Haar probability measure on the base $\Pic^d(\Sigma)$, gives rise to the probability measure $\mu_d$ on $\mathcal{M}_d(\Sigma)$.\\

Large deviations estimates  of overcrowding and undercrowding of zeros of random entire functions and random holomorphic sections been intensively studied, see \cite[Corollaire 7.4]{dinhsibony},  \cite{kris},\cite{over},   and \cite{sodin}. The main difference here is that the equation defining a zero of a holomorphic section or of an entire function is linear, whereas we will see that the one defining a critical points of a branched covering is quadratic and then  computations and estimates cannot be done purely  by Gaussian methods.\\

The paper is organized as follows. In Section \ref{ramframwork} we construct the probability measure $\mu_d$ on the space $\mathcal{M}_d(\Sigma)$ of degree $d$ branched coverings. 
In Section \ref{largesection}, we prove a large deviations estimate for the $L^1$-norm of the random variable $u\in\mathcal{M}_d(\Sigma)\mapsto |\log \norm{du}|$, see Proposition \ref{fondame}. H\"ormander peak sections and Bergman kernel estimates will play an important role in the proof of this large deviations estimate.
Finally, in Section \ref{prove} we combine these large deviations estimates together with Poincar\'e-Lelong formula to get the main theorems.
\section{Framework and probability measure on $\mathcal{M}_d(\Sigma)$}\label{ramframwork}
\subsection{Branched coverings and line bundles}
Throughout all the paper, $\Sigma$ will denote a smooth closed Riemann surface.
\begin{prop}\label{proj} Let $\mathcal{L}$ be a degree $d$ line bundle over $\Sigma$ and $(\alpha,\beta)\in H^0(\Sigma;\mathcal{L})^2$ two global sections without common zeros, then the map $u_{\alpha\beta}:\Sigma\rightarrow\C \mathbb{P}^1$ defined by $x\mapsto [\alpha(x):\beta(x)]$ is a degree $d$ branched covering. Two pairs  $(\alpha,\beta)$ and $(\alpha',\beta')$ of global holomorphic sections of $\mathcal{L}$ define the same  branched covering if and only if  $(\alpha',\beta')=(\lambda\alpha,\lambda\beta)$ for some $\lambda\in\C^*$.
\end{prop}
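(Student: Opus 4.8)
The plan is to verify two things: that $u_{\alpha\beta}$ is a well-defined holomorphic map of degree $d$ with only finitely many ramification points (hence a branched covering), and that the fibers of the map $(\alpha,\beta)\mapsto u_{\alpha\beta}$ are exactly the $\mathbb{C}^*$-orbits. For the first part, I would argue locally. Fix a point $x_0\in\Sigma$ and a local holomorphic trivialization of $\mathcal{L}$ near $x_0$, so that $\alpha,\beta$ become honest holomorphic functions $a,b$ on a neighborhood $V$ of $x_0$. Since $\alpha,\beta$ have no common zero, $(a(x_0),b(x_0))\neq(0,0)$; after swapping the two homogeneous coordinates on $\mathbb{C}\mathbb{P}^1$ if necessary, we may assume $b(x_0)\neq 0$, and then on a smaller neighborhood $u_{\alpha\beta}$ is given in the affine chart $\{[z:1]\}$ by the holomorphic function $a/b$. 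This shows $u_{\alpha\beta}$ is holomorphic on all of $\Sigma$; note that the transition functions of $\mathcal{L}$ cancel in the ratio $[\alpha:\beta]$, so the map does not depend on the chosen trivialization and is globally well defined.

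Next I would show $u_{\alpha\beta}$ is nonconstant and compute its degree. It is nonconstant because $\mathcal{L}$ has positive degree $d$, so a section like $\alpha$ has zeros, whereas $\beta$ does not vanish at those zeros (no common zeros), so $u_{\alpha\beta}$ takes the value $[0:1]$ at finitely many points; if $u_{\alpha\beta}$ were constant, that set would be empty or all of $\Sigma$, a contradiction since $d\geq 1$. A nonconstant holomorphic map between compact Riemann surfaces is automatically a branched covering, with only finitely many ramification points. To get the degree, count the preimage of a regular value, e.g.\ a point $[\lambda:\mu]\in\mathbb{C}\mathbb{P}^1$ not in the (finite) branch locus: its preimage is the zero set of the global section $\mu\alpha-\lambda\beta\in H^0(\Sigma;\mathcal{L})$, which has exactly $\deg\mathcal{L}=d$ zeros counted with multiplicity, and for a regular value these are simple. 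Hence $\deg u_{\alpha\beta}=d$.

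For the second statement, the "if" direction is immediate: $[\lambda\alpha(x):\lambda\beta(x)]=[\alpha(x):\beta(x)]$ for $\lambda\in\mathbb{C}^*$. For "only if", suppose $u_{\alpha\beta}=u_{\alpha'\beta'}$. Pick any point $x$ where $\alpha(x)\neq 0$; then at $x$ the equality of points in $\mathbb{C}\mathbb{P}^1$ gives a scalar $\lambda(x)\in\mathbb{C}^*$ with $\alpha'(x)=\lambda(x)\alpha(x)$ and $\beta'(x)=\lambda(x)\beta(x)$. Working in a local trivialization, on the open set where $\alpha\neq 0$ we have $\lambda=\alpha'/\alpha$, a ratio of holomorphic functions with nonvanishing denominator, hence holomorphic there; the same argument on the open set where $\beta\neq 0$ gives $\lambda=\beta'/\beta$, and the two expressions agree on the overlap, so they glue to a single holomorphic function $\lambda$ on all of $\Sigma$ (the union of these two open sets is $\Sigma$ since $\alpha,\beta$ have no common zero). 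A holomorphic function on the compact $\Sigma$ is constant, and it is nowhere zero, so $\lambda\in\mathbb{C}^*$ and $(\alpha',\beta')=(\lambda\alpha,\lambda\beta)$.

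The only mildly delicate point is checking that $\lambda$ extends holomorphically across the common overlap and is genuinely globally defined; this is where the no-common-zeros hypothesis is used twice (once to make $u_{\alpha\beta}$ well defined on all of $\Sigma$, once to cover $\Sigma$ by the two charts on which $\lambda$ is manifestly holomorphic). Everything else is standard Riemann surface theory, and I expect no serious obstacle.
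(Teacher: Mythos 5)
Your argument is correct, and it actually proves more than the paper's own proof does: the paper's proof of this proposition addresses only the second claim (uniqueness up to a common scalar), taking the first claim for granted, whereas you verify that $u_{\alpha\beta}$ is holomorphic, nonconstant, and of degree $d$ by identifying the fiber over a regular value $[\lambda:\mu]$ with the zero divisor of the section $\mu\alpha-\lambda\beta\in H^0(\Sigma;\mathcal{L})$ -- a clean and standard way to get the degree. For the uniqueness statement your route also differs in its organization: the paper compares the preimages of $[0:1]$ and of $[1:0]$ to deduce $\alpha=\lambda'\alpha'$ and $\beta=\lambda''\beta'$ from the equality of zero divisors, and then evaluates at a point of the preimage of $[1:1]$ to reconcile the two constants $\lambda'=\lambda''$; you instead build a single function $\lambda$ by gluing $\alpha'/\alpha$ on $\{\alpha\neq 0\}$ with $\beta'/\beta$ on $\{\beta\neq 0\}$ (these agree on the overlap because $\alpha'\beta=\alpha\beta'$ there), and conclude by the no-common-zeros hypothesis and compactness that $\lambda$ is a global nonvanishing holomorphic function, hence a constant in $\C^*$. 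Both arguments rest on the same principle (a ratio of sections of the same line bundle is a meromorphic function, and a nonvanishing holomorphic function on a compact surface is constant), but your gluing avoids the extra normalization step at $[1:1]$ and handles both components at once; the paper's version is slightly shorter to write since it never introduces the meromorphic ratio explicitly. Two microscopic points you may wish to make explicit: the identity $\alpha'=\lambda\alpha$ at the (isolated) zeros of $\alpha$ follows by continuity or the identity theorem, and the nonconstancy argument tacitly uses $d\geq 1$ together with the fact that a nowhere-vanishing section would force $\deg\mathcal{L}=0$; neither affects the validity of the proof in the paper's setting.
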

\begin{proof}
If $(\alpha',\beta')=(\lambda\alpha,\lambda\beta)$ for some $\lambda\in\C^*$ then it is obvious that we get the same branched covering.
Suppose now that two pairs  $(\alpha,\beta), (\alpha',\beta')$ of  holomorphic sections of $\mathcal{L}$ define the same  branched covering. In particular the sets $u_{\alpha\beta}^{-1}([0:1])$ and $u_{\alpha'\beta'}^{-1}([0:1])$ coincide. This implies that $\alpha$ and $\alpha'$ have the same zeros so that $\alpha=\lambda'\alpha'$ for some $\lambda'\in\C^*$. Taking the preimage of $[1:0]$, with the same argument we get $\beta=\lambda''\beta'$ for some $\lambda''\in\C^*$. Taking a point $x$ in the preimage of $[1:1]$ we get $\alpha(x)=\beta(x)$ and $\lambda'\alpha(x)=\lambda''\beta(x)$. This gives  $\lambda'=\lambda''$, hence the result.
\end{proof}

\begin{defn} We denote by $\mathcal{M}_d(\Sigma)$  the space of  degree $d$ branched coverings $u:\Sigma\rightarrow \C \mathbb{P}^1$ from $\Sigma$ to the Riemann sphere $\C \mathbb{P}^1$.
\end{defn}

\begin{prop}\label{fb} The space $\mathcal{M}_d(\Sigma)$ is  fibered over the space  $\Pic^d({\Sigma})$ of degree $d$ line bundles over $\Sigma$. The fibration is given by $u\in\mathcal{M}_d(\Sigma)\mapsto u^*\mathcal{O}(1)\in\Pic^d({\Sigma})$. The fiber $\mathcal{M}_d(\Sigma,{\mathcal{L}})$  over $\mathcal{L}\in\Pic^d({\Sigma})$ is the dense open subset of $\mathbb{P}(H^0(\Sigma;\mathcal{L})^2)$  given by (the class of) pair of sections $(\alpha,\beta)$ without common zeros.
\end{prop}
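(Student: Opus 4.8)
The plan is to proceed in three steps: (i) check that $u\mapsto u^*\mathcal{O}(1)$ is a well-defined map $\mathcal{M}_d(\Sigma)\to\Pic^d(\Sigma)$; (ii) identify the fiber over a class $\mathcal{L}$ with the set of classes of pairs of holomorphic sections of $\mathcal{L}$ without common zeros, using Proposition \ref{proj}; (iii) show that this set is a dense open subset of $\mathbb{P}(H^0(\Sigma;\mathcal{L})^2)$. Step (i) is immediate: $\mathcal{O}(1)$ has degree $1$ on $\C\mathbb{P}^1$ and $u$ has degree $d$, so $u^*\mathcal{O}(1)$ has degree $d$ on $\Sigma$ and hence lies in $\Pic^d(\Sigma)$.

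For step (ii), let $X_0,X_1\in H^0(\C\mathbb{P}^1,\mathcal{O}(1))$ be the two homogeneous coordinate sections, which have no common zero on $\C\mathbb{P}^1$. Given $u$ with $u^*\mathcal{O}(1)\cong\mathcal{L}$, choosing such an isomorphism and setting $\alpha:=u^*X_0$, $\beta:=u^*X_1\in H^0(\Sigma;\mathcal{L})$ produces a pair without common zeros with $u=u_{\alpha\beta}$; a different choice of isomorphism, which is unique only up to $\mathrm{Aut}(\mathcal{L})=\C^*$ (holomorphic functions on the compact connected $\Sigma$ being constant), rescales $(\alpha,\beta)$ by a common factor, so the class $[\alpha:\beta]\in\mathbb{P}(H^0(\Sigma;\mathcal{L})^2)$ depends only on $u$ and lies in the locus of pairs without common zeros. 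Conversely, Proposition \ref{proj} attaches to any such pair a branched covering $u_{\alpha\beta}$ with $u_{\alpha\beta}^*\mathcal{O}(1)\cong\mathcal{L}$ (a local computation in the two standard charts of $\C\mathbb{P}^1$ shows the pullbacks of $X_0,X_1$ recover $\alpha,\beta$ via the canonical isomorphism the pair determines), and declares two pairs to give the same covering exactly when they are proportional. These two assignments are mutually inverse, identifying $\mathcal{M}_d(\Sigma,\mathcal{L})$ with $\{[\alpha:\beta]\in\mathbb{P}(H^0(\Sigma;\mathcal{L})^2):\alpha,\beta\text{ have no common zero}\}$.

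For step (iii), the set of classes $[\alpha:\beta]$ having a common zero is the image of the closed incidence subvariety $I=\{(x,[\alpha:\beta]):\alpha(x)=0,\ \beta(x)=0\}$ of $\Sigma\times\mathbb{P}(H^0(\Sigma;\mathcal{L})^2)$ under the (proper) second projection, hence it is Zariski closed; its complement, the fiber, is therefore open. Density requires $\mathcal{L}$ to be base-point-free, which holds for every $\mathcal{L}\in\Pic^d(\Sigma)$ as soon as $d\geq 2g$ --- the regime of large degree considered here, in which the image of $u\mapsto u^*\mathcal{O}(1)$ is moreover all of $\Pic^d(\Sigma)$. Under this hypothesis each hyperplane $\{\alpha\in H^0(\Sigma;\mathcal{L}):\alpha(x)=0\}$ is proper, so each fiber of $I\to\Sigma$ has dimension $2h^0(\mathcal{L})-3$ and $\dim I\leq 1+(2h^0(\mathcal{L})-3)=2h^0(\mathcal{L})-2<2h^0(\mathcal{L})-1=\dim\mathbb{P}(H^0(\Sigma;\mathcal{L})^2)$; thus the common-zero locus is a proper Zariski-closed subset of the irreducible variety $\mathbb{P}(H^0(\Sigma;\mathcal{L})^2)$ and its complement is dense.

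The only point deserving genuine care is the well-definedness in step (ii): the fiber is a priori just a set of branched coverings, and realizing it inside the \emph{fixed} projective space $\mathbb{P}(H^0(\Sigma;\mathcal{L})^2)$ forces one to track the $\C^*$-scaling coming from the choice of isomorphism $u^*\mathcal{O}(1)\cong\mathcal{L}$ and to check that it disappears after projectivizing; everything else is an unwinding of Proposition \ref{proj} together with a dimension count. If one wants the sharper statement that $\mathcal{M}_d(\Sigma)$ is a genuine locally trivial fiber bundle, one fixes a Poincar\'e line bundle on $\Sigma\times\Pic^d(\Sigma)$: since $d>2g-2$, Riemann--Roch gives $h^0(\mathcal{L})=d-g+1$ for every $\mathcal{L}\in\Pic^d(\Sigma)$, so the spaces $H^0(\Sigma;\mathcal{L})$ are the fibers of a rank $2(d-g+1)$ vector bundle over $\Pic^d(\Sigma)$, and $\mathcal{M}_d(\Sigma)$ is the associated projective bundle with the fiberwise common-zero locus removed.
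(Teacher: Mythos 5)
Your argument is correct, and its core coincides with the paper's own proof: the paper likewise sends $u$ to $u^*\mathcal{O}(1)$ together with the pulled-back coordinate sections $u^*x_0,u^*x_1$, sends a pair $(\alpha,\beta)$ without common zeros to $u_{\alpha\beta}$, and invokes Proposition \ref{proj} to see that two pairs define the same covering exactly when they are proportional --- and that is essentially the whole of the paper's proof. Where you genuinely go further is twofold. First, you make explicit what the paper leaves implicit: identifying the fiber with a subset of the \emph{fixed} space $\mathbb{P}(H^0(\Sigma;\mathcal{L})^2)$ requires choosing an isomorphism $u^*\mathcal{O}(1)\cong\mathcal{L}$ and checking that the residual $\mathrm{Aut}(\mathcal{L})=\C^*$ ambiguity disappears after projectivizing; this is a routine but real verification hiding behind the paper's ``hence the result''. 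Second, you actually prove the ``dense open'' part of the statement, which the paper asserts without argument: openness via properness of the projection of the incidence variety $I\subset\Sigma\times\mathbb{P}(H^0(\Sigma;\mathcal{L})^2)$, and density via the dimension count $\dim I\leq 2h^0(\mathcal{L})-2<2h^0(\mathcal{L})-1$. Your caveat that density requires $\mathcal{L}$ to be base-point-free (automatic once $d\geq 2g$, the asymptotic regime of the paper) is an honest one: for special $\mathcal{L}$ of small degree the fiber can be empty, so the proposition as literally stated needs this restriction, on which the paper is silent. The payoff of your extra work is precisely what the paper uses later anyway: the complement of $\M_d(\Sigma,\mathcal{L})$ is a proper closed subvariety, hence of Fubini--Study measure zero, which is what makes the restriction of $\mu_{\mathcal{L}}$ to the fiber a probability measure.
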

\begin{proof}
Given a degree $d$ branched covering $u:\Sigma\rightarrow\C \mathbb{P}^1$, we get a degree $d$ line bundle   $u^*\mathcal{O}(1)$ over $\Sigma$ and two global holomorphic sections  $u^*x_0,u^*x_1\in H^0(\Sigma;u^*\mathcal{O}(1))$ without common zeros.
Conversely, if we have a degree $d$ line bundle $\mathcal{L}\rightarrow \Sigma$ and two global sections $(\alpha,\beta)\in H^0(\Sigma;\mathcal{L})^2$ without common zeros, then the map $u_{\alpha\beta}:\Sigma\rightarrow\C \mathbb{P}^1$ defined by $x\mapsto [\alpha(x):\beta(x)]$ is a  well-defined degree $d$ branched covering. 
By Proposition \ref{proj},  $u_{\alpha\beta}=u_{\alpha'\beta'}$ if and only if  $(\alpha',\beta')=(\lambda\alpha,\lambda\beta)$ for some $\lambda\in\C^*$. Hence the result.
\end{proof}
\subsection{$L^2$-products and Bergman kernel}\label{bergman} Let $\mathcal{L}$ be a degree $d$ line bundle over $\Sigma$. In this section we construct a $L^2$-Hermitian product on $H^0(\Sigma;\mathcal{L})^2$. This Hermitian product is associated with a volume form $\omega$ of total volume $1$, which is fixed once for all.
\begin{prop}\label{metr} Let $\mathcal{L}$ be a degree $d$ line bundle over $\Sigma$ and $\omega$ a volume form on $\Sigma$ of mass $1$. Then, there exists an unique  Hermitian metric $h$ (up to multiplication by a positive constant) such that $c_1(\mathcal{L},h)=d\cdot\omega$.
\end{prop}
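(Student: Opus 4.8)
The plan is to reduce the statement to the standard $\partial\bar\partial$-lemma on a compact Riemann surface. First I would fix an arbitrary smooth Hermitian metric $h_0$ on $\mathcal{L}$ and let $\eta=c_1(\mathcal{L},h_0)$ be its Chern form, a real closed $2$-form on $\Sigma$ representing $c_1(\mathcal{L})$ in de Rham cohomology. Since $\Sigma$ is compact and connected of (real) dimension $2$, we have $H^2_{dR}(\Sigma,\R)\cong\R$ via integration, and $\int_\Sigma\eta=\deg\mathcal{L}=d$ while $\int_\Sigma d\omega=d\int_\Sigma\omega=d$ as well. Hence $\eta-d\omega$ is an exact $2$-form; by the global $\partial\bar\partial$-lemma on a compact Kähler (here, Riemann) surface, there exists $\varphi\in\mathcal{C}^\infty(\Sigma,\R)$ with $\eta-d\omega=\frac{i}{\pi}\partial\bar\partial\varphi$ (with whatever normalization makes $\frac{i}{2\pi}\partial\bar\partial$ the curvature operator in the chosen convention).

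Next, for existence I would modify the metric: set $h=e^{-2\varphi}h_0$ (again up to the normalizing constant dictated by the convention), so that the curvature transforms by $c_1(\mathcal{L},h)=c_1(\mathcal{L},h_0)+\frac{i}{\pi}\partial\bar\partial\varphi=\eta-(\eta-d\omega)=d\omega$, which is exactly the required identity. This proves existence of $h$. For uniqueness, suppose $h$ and $h'$ both satisfy $c_1(\mathcal{L},\cdot)=d\omega$. Then $h'/h=e^{-2\psi}$ for a globally defined $\psi\in\mathcal{C}^\infty(\Sigma,\R)$, and subtracting the two curvature equations gives $\frac{i}{\pi}\partial\bar\partial\psi=0$, i.e. $\psi$ is harmonic on the compact surface $\Sigma$, hence constant; therefore $h'=e^{-2c}h$ for a constant $c$, which is precisely uniqueness up to a positive multiplicative constant.

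The only genuine inputs are the identification $H^2_{dR}(\Sigma,\R)\cong\R$ with the degree reading off $\int_\Sigma c_1$, the $\partial\bar\partial$-lemma, and the maximum principle for harmonic functions on a compact manifold; all are classical. The main point to be careful about is bookkeeping of normalization constants so that the curvature form appearing in $c_1(\mathcal{L},h)$ matches the convention under which $\int_\Sigma c_1(\mathcal{L},h)=d$, and correspondingly that the prescribed form $d\cdot\omega$ has the same total mass $d$; with $\omega$ normalized to mass $1$ this is automatic, which is exactly why the normalization of $\omega$ is imposed. I do not anticipate a serious obstacle: the statement is essentially the assertion that, on a Riemann surface, a line bundle of degree $d$ carries a metric whose Chern form is any prescribed representative of $d$ times the normalized volume class, and this is the $2$-real-dimensional case of the Calabi–Yau type prescription which here is linear and solved directly by $\partial\bar\partial$.
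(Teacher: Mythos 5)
Your proposal is correct and follows essentially the same route as the paper: fix a reference metric $h_0$, observe that its Chern form and $d\cdot\omega$ are cohomologous (both integrating to $d$), apply the $\partial\bar\partial$-lemma to get a conformal factor, and deduce uniqueness from $\partial\bar\partial c=0$ forcing $c$ to be constant on the compact surface $\Sigma$. The only differences are notational (sign and normalization of the conformal factor), which you handle correctly.
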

\begin{proof} We start with any Hermitian metric $h_0$ of  $\mathcal{L}$. Its curvature equals $\omega_0=\frac{1}{2i\pi}\partial\bar{\partial}\phi_0$, where $\phi_0=\log h_0(e_{\mathcal{L}},e_{\mathcal{L}})$ is its local potential and $e_{\mathcal{L}}$ is any non-vanishing local section. As $[\omega_0]=[d\cdot\omega]\in H^{1,1}(\Sigma;\R)$, by the $\partial\bar{\partial}$-lemma we have $d\cdot\omega=\omega_0+\frac{1}{2i\pi}\partial\bar{\partial}f$, for $f\in \mathcal{C}^{\infty}(\Sigma)$.  Then, the curvature of the Hermitian metric $h\doteqdot e^{f}h_0$  equals $\omega$. If $\tilde{h}$ is another Hermitian metric, then we have $\tilde{h}=e^{c}h$ , where $c$ is a real function $\Sigma\rightarrow \R$. If we suppose that $c_1(\mathcal{L},\tilde{h})=d\cdot\omega$, then we obtain $\partial\bar{\partial}c=0$,  which implies that $c$ is constant as $\Sigma$ is compact. Hence the result.
\end{proof}
A Hermitian metric $h$ on $\mathcal{L}$ induces a $L^2$-Hermitian product $\langle\cdot,\cdot \rangle_{L^2}$ on $H^0(\Sigma;\mathcal{L})$.  It is  defined by 
$$\langle\alpha,\beta\rangle_{L^2}=\int_{x\in\Sigma}h_x(\alpha(x),\beta(x))\omega$$
for all $\alpha,\beta$ in $H^0(\Sigma;\mathcal{L})$. The induced Hermitian product   on $H^0(\Sigma;\mathcal{L})^2$ is still denoted by $\langle\cdot,\cdot \rangle_{L^2}$.\\
Throughout all the paper, the  Hermitian metric on $\mathcal{L}$ we will consider is the one given by Proposition \ref{metr}.

Let $\mathcal{F}$ and $\mathcal{E}$ be respectively a degree $1$ and $0$ line bundles on $\Sigma$. We equip $\mathcal{F}$ and $\mathcal{E}$ by the  Hermitian metrics given by Proposition \ref{metr} which we denote by $h_{\mathcal{F}}$ and $h_{\mathcal{E}}$. In particular the metric $h_{\mathcal{F}}^d\otimes h_{\mathcal{E}}$ on $\mathcal{F}^d\otimes\mathcal{E}$ is such that its curvature equals $d\cdot\omega$. We denote by  $\mathcal{K}_{\mathcal{F}^d\otimes\mathcal{E}}(z,w)$  the Bergman kernel associated with the Hermitian line bundle $(\mathcal{F}^d\otimes\mathcal{E},h_{\mathcal{F}}^d\otimes h_{\mathcal{E}})$. The estimates of the Bergman kernel are well known, see \cite[Section 4.2]{ma2} or \cite{ber1,tian,zel}.  
In particular, along the diagonal, we have that $\mathcal{K}_{\mathcal{F}^d\otimes\mathcal{E}}(x,x)=\frac{d}{\pi}+O(1)$, $ \frac{\partial}{\partial z}\mathcal{K}_{\mathcal{F}^d\otimes\mathcal{E}}(x,x)=O(\sqrt{d})$ and $\frac{\partial^2}{\partial z\partial w}\mathcal{K}_{\mathcal{F}^d\otimes\mathcal{E}}(x,x)=\frac{d^2}{\pi}+O(d)$.
For any $x\in\Sigma$, let us consider the evaluation map $ev_x:H^0(\Sigma,\mathcal{F}^d\otimes\mathcal{E})\rightarrow(\mathcal{F}^d\otimes\mathcal{E})_x$ defined by $s\mapsto s(x)$. We denote by $\sigma_0$ the section of unit $L^2$-norm which generates the orthogonal of $\ker ev_x$. Similarly, we consider the map $j^1_x:\ker ev_x\rightarrow (\mathcal{F}^d\otimes\mathcal{E})_x\otimes T^*_{\Sigma}$ and we denote by $\sigma_1$ the section of unit $L^2$-norm generating the orthogonal of $\ker j^1_x$. We call $\sigma_0$ and $\sigma_1$ the \emph{peak sections} at $x$.

\begin{prop}\label{peak} Let $\mathcal{F}$ and $\mathcal{E}$ be respectively a degree $1$ and $0$ line bundles on $\Sigma$ and $x\in\Sigma$ be a point. Let $\sigma_0$ and $\sigma_1$ be the peak sections at $x$ associated with the line bundle $\mathcal{F}^d\otimes \mathcal{E}$. Then $\norm{\sigma_0(x)}^2\sim \frac{d}{\pi}$ and $\norm{\nabla\sigma_1(x)}^2\sim \frac{d^2}{\pi}$ as $d\rightarrow\infty$. 
\end{prop}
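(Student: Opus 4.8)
The plan is to reduce both asymptotics to the Bergman kernel estimates recalled just before the statement, by comparing the peak sections with an orthonormal basis of $H^0(\Sigma,\mathcal{F}^d\otimes\mathcal{E})$ adapted to the flag $\ker j^1_x\subset\ker ev_x\subset H^0(\Sigma,\mathcal{F}^d\otimes\mathcal{E})$. First I would fix $d$ large enough that $ev_x$ and $j^1_x$ are both surjective; this holds as soon as $d\geq 2g+1$ (a degree $\geq 2g+1$ line bundle on a genus $g$ curve generates $1$-jets at $x$), and it is in any case forced for $d\gg1$ by $\mathcal{K}_{\mathcal{F}^d\otimes\mathcal{E}}(x,x)=\frac d\pi+O(1)>0$. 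Then $(\ker ev_x)^{\perp}$ and $(\ker j^1_x)^{\perp}\cap\ker ev_x$ are each $1$-dimensional, so I may pick an orthonormal basis $(s_i)_{i\geq 0}$ with $s_0=\sigma_0$, $s_1=\sigma_1$ and $s_i\in\ker j^1_x$ for $i\geq 2$; by construction $s_i(x)=0$ for all $i\geq 1$, $\nabla s_i(x)=0$ for all $i\geq 2$, and $\sigma_1(x)=0$.

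For the first asymptotic I would use that along the diagonal the Bergman kernel is the function $x\mapsto\mathcal{K}_{\mathcal{F}^d\otimes\mathcal{E}}(x,x)=\sum_{i\geq 0}\norm{s_i(x)}^2$; since every term with $i\geq 1$ vanishes, this reads $\norm{\sigma_0(x)}^2=\mathcal{K}_{\mathcal{F}^d\otimes\mathcal{E}}(x,x)=\frac d\pi+O(1)$, hence $\norm{\sigma_0(x)}^2\sim\frac d\pi$. For the second asymptotic I would work in a normal frame at $x$ (a holomorphic coordinate $z$ centered at $x$ and a holomorphic frame $e$ of $\mathcal{F}^d\otimes\mathcal{E}$ whose metric potential vanishes to first order at $x$), in which the Chern connection reduces to $\partial$ at $x$; the quoted estimates on the first and second derivatives of the Bergman kernel on the diagonal then become, respectively, $\norm{\sum_{i\geq 0}\nabla s_i(x)\otimes\overline{s_i(x)}}=O(\sqrt d)$ and $\sum_{i\geq 0}\norm{\nabla s_i(x)}^2=\frac{d^2}{\pi}+O(d)$. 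Using $s_i(x)=0$ for $i\geq 1$, the first collapses to $\norm{\nabla\sigma_0(x)}\,\norm{\sigma_0(x)}=O(\sqrt d)$, so $\norm{\nabla\sigma_0(x)}=O(1)$ by the first asymptotic; using $\nabla s_i(x)=0$ for $i\geq 2$, the second collapses to $\norm{\nabla\sigma_0(x)}^2+\norm{\nabla\sigma_1(x)}^2=\frac{d^2}{\pi}+O(d)$, and combining the two yields $\norm{\nabla\sigma_1(x)}^2=\frac{d^2}{\pi}+O(d)\sim\frac{d^2}{\pi}$.

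The step requiring the most care is the identification, in the normal frame, of the coordinate derivatives $\frac{\partial}{\partial z}\mathcal{K}$ and $\frac{\partial^2}{\partial z\partial w}\mathcal{K}$ on the diagonal with the covariant‑derivative sums $\sum_i\nabla s_i(x)\otimes\overline{s_i(x)}$ and $\sum_i\norm{\nabla s_i(x)}^2$: this is the standard jet computation behind the Bergman kernel expansion (see \cite[Section 4.2]{ma2}), and the normalizations of $z$ and of the induced metric on $T^*_x\Sigma$ for which the quoted asymptotics hold are precisely the ones making the constant $\frac{d^2}{\pi}$ come out, so no discrepancy in constants arises. An alternative that avoids discussing $\nabla\sigma_0$ altogether is to apply the diagonal and $1$-jet Bergman estimates to the orthogonal projection of $H^0(\Sigma,\mathcal{F}^d\otimes\mathcal{E})$ onto $\ker ev_x$: its Bergman kernel differs from the full one by the rank‑one term $\sigma_0\otimes\overline{\sigma_0}$, which perturbs the value and the second derivative on the diagonal only by $O(1)$, so its $1$-jet at $x$ equals $\frac{d^2}{\pi}+O(d)$; since $\sigma_1$ is the unique element of an adapted orthonormal basis of $\ker ev_x$ with $\nabla s_i(x)\neq 0$, this quantity is exactly $\norm{\nabla\sigma_1(x)}^2$.
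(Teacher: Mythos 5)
Your proof is correct and takes essentially the same route as the paper: complete the peak sections to an orthonormal basis adapted to the flag $\ker j^1_x\subset\ker ev_x$, identify $\mathcal{K}_{\mathcal{F}^d\otimes\mathcal{E}}(x,x)$ and $\frac{\partial^2}{\partial z\partial w}\mathcal{K}_{\mathcal{F}^d\otimes\mathcal{E}}(x,x)$ with $\sum_i\norm{s_i(x)}^2$ and $\sum_i\norm{\nabla s_i(x)}^2$, which collapse to $\norm{\sigma_0(x)}^2$ and $\norm{\nabla\sigma_0(x)}^2+\norm{\nabla\sigma_1(x)}^2$, and eliminate the $\nabla\sigma_0$ term using the $O(\sqrt d)$ estimate on the first derivative of the kernel. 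Your handling of that last step, via $\norm{\nabla\sigma_0(x)}\cdot\norm{\sigma_0(x)}=O(\sqrt d)$, is if anything stated more cleanly than the paper's corresponding identity.
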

\begin{proof} 
We complete the orthonormal family $\{\sigma_0,\sigma_1\}$ into an orthonormal basis $\{\sigma_0,\sigma_1,\sigma_2,\dots,\sigma_{N_d-1}\}$ of $H^0(\Sigma;\mathcal{F}^d\otimes \mathcal{E})$. Then, $\sum_{i=0}^{N_d-1}\norm{\sigma_i(x)}^2$ equals the Bergman kernel $\mathcal{K}_{\mathcal{F}^d\otimes \mathcal{E}}(x,x)$. Similarly, we have $\sum_{i=0}^{N_d-1}\norm{\nabla\sigma_i(x)}^2=\frac{\partial^2}{\partial z\partial w}\mathcal{K}_{\mathcal{\mathcal{F}}^d\otimes \mathcal{E}}(x,x)$. Now, we know that, as $d$ goes to infinity, $\mathcal{K}_{\mathcal{F}^d\otimes \mathcal{E}}(x,x)=\frac{d}{\pi}+O(1)$ and $\frac{\partial^2}{\partial z\partial w}\mathcal{K}_{\mathcal{F}^d\otimes \mathcal{E}}(x,x)=\frac{d^2}{\pi}+O(d)$ and that, by the construction of the peak sections $\sigma_0$ and $\sigma_1$, we have $\sum_{i=0}^{N_d-1}\norm{\sigma_i(x)}^2=\norm{\sigma_0(x)}^2$ and $\sum_{i=0}^{N_d-1}\norm{\nabla\sigma_i(x)}^2=\norm{\nabla\sigma_0(x)}^2+\norm{\nabla\sigma_1(x)}^2$. Now, it is easy to see that $\norm{\nabla\sigma_0(x)}^2=\frac{\frac{\partial}{\partial z}\mathcal{K}_{\mathcal{F}^d\otimes \mathcal{E}}(x,x)}{\sqrt{\mathcal{K}_{\mathcal{F}^d\otimes \mathcal{E}}(x,x)}}$ and the latter is a $O(1)$. Hence the result.
\end{proof}

\subsection{Probability on $\mathcal{M}_d(\Sigma)$}\label{sectproba}
Let $\Sigma$ be a closed Riemann surface equipped with a volume form $\omega$ of total mass $1$.
In this section, we construct  a natural probability measure on the space $\mathcal{M}_d(\Sigma)$ of degree $d$ branched coverings from $\Sigma$ to $\mathbb{C}\mathbb{P}^1$.\\
Given $\mathcal{L}$ a degree $d$ line bundle over $\Sigma$, we have seen that a  Hermitian metric $h$  induces a natural $L^2$-Hermitian product on $H^0(\Sigma;\mathcal{L})$ and then  on $H^0(\Sigma;\mathcal{L})^2$.
The $L^2$-product  on $H^0(\Sigma;\mathcal{L})^2$ induces a Fubini-Study metric on  $\mathbb{P}(H^0(\Sigma;\mathcal{L})^2)$.
We recall that the Fubini-Study metric is constructed as follows. First we restrict the Hermitian product to the unit sphere of $H^0(\Sigma;\mathcal{L})^2$. The obtained metric is then invariant under the action of $S^1=U(1)$. The Fubini-Study metric is then the quotient metric on $\mathbb{P}(H^0(\Sigma;\mathcal{L})^2)$. 
\begin{defn}\label{measurel} Let $\mathcal{L}$ be a  line bundle over $\Sigma$. We denote by  $\mu_{\mathcal{L}}$ the probability measure on  $\mathbb{P}( H^0(\Sigma;\mathcal{L})^2)$ induced by the normalized Fubini-Study volume form. Here, the Fubini-Study metric on $\mathbb{P}(H^0(\Sigma;\mathcal{L})^2)$ is the one induced by the Hermitian metric on $\mathcal{L}$ given by Proposition \ref{metr}.
\end{defn}
\begin{prop}\label{fs} The probability measure $\mu_{\mathcal{L}}$ over  $\mathbb{P}( H^0(\Sigma;\mathcal{L})^2)$ does not depend on the choice of the multiplicative constant in front  of the metric $h$ given by Proposition \ref{metr}.
\end{prop}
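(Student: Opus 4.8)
The plan is to trace through the construction of $\mu_{\mathcal{L}}$ and observe that each step is insensitive to rescaling $h$ by a positive constant $\lambda$. First I would note that replacing $h$ by $\lambda h$ multiplies the $L^2$-Hermitian product $\langle\cdot,\cdot\rangle_{L^2}$ on $H^0(\Sigma;\mathcal{L})$, and hence on $H^0(\Sigma;\mathcal{L})^2$, by the same constant $\lambda$: indeed, from the defining formula $\langle\alpha,\beta\rangle_{L^2}=\int_{\Sigma}h_x(\alpha(x),\beta(x))\,\omega$, scaling $h$ by $\lambda$ scales the integrand pointwise by $\lambda$, and $\omega$ is untouched. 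So the whole question reduces to: does the Fubini–Study volume form on $\mathbb{P}(V)$ induced by a Hermitian product on a finite-dimensional complex vector space $V$ depend on the overall scale of that product?

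The answer is no, and I would argue it directly from the construction recalled just before the statement. Rescaling the Hermitian product by $\lambda>0$ rescales the unit sphere $S(V)=\{v:\langle v,v\rangle=1\}$ to $S'=\{v:\langle v,v\rangle=\lambda^{-1}\}$, i.e. it is carried to $S(V)$ by the homothety $v\mapsto\sqrt{\lambda}\,v$. This homothety is $U(1)$-equivariant and descends to the identity on $\mathbb{P}(V)$, since $\sqrt{\lambda}v$ and $v$ span the same line. Therefore the quotient Riemannian metric — and a fortiori its associated volume form — on $\mathbb{P}(V)$ is literally unchanged. After normalization to a probability measure (dividing by the total FS volume of $\mathbb{P}(V)$, which is itself finite and positive), we get exactly the same $\mu_{\mathcal{L}}$.

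Alternatively, and perhaps more cleanly for the write-up, I would invoke the standard fact that the Fubini–Study metric on $\mathbb{P}(V)$ depends only on the unitary equivalence class of the Hermitian product on $V$: any two Hermitian products related by a positive scalar are related by the linear automorphism $\sqrt{\lambda}\,\mathrm{Id}$, which acts trivially on $\mathbb{P}(V)$, so the induced metrics coincide; since the resulting volume forms coincide, so do the normalized probability measures. I expect the main (very mild) obstacle is purely expository: being careful to phrase the scaling correctly — a factor $\lambda$ in front of $h$ corresponds to the dilation $\sqrt{\lambda}$ on vectors — and to separate cleanly the two logical steps, namely that rescaling $h$ only rescales $\langle\cdot,\cdot\rangle_{L^2}$, and that rescaling a Hermitian product leaves the normalized Fubini–Study measure invariant. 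No delicate analysis is involved.
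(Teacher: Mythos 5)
Your proposal is correct and follows essentially the same route as the paper: observe that rescaling $h$ by a positive constant only rescales the induced $L^2$-Hermitian product, and that such a rescaling leaves the normalized Fubini--Study measure on $\mathbb{P}(H^0(\Sigma;\mathcal{L})^2)$ unchanged. The only difference is that you justify the scale-invariance of the Fubini--Study measure explicitly (via the homothety $v\mapsto\sqrt{\lambda}\,v$ descending to the identity on $\mathbb{P}(V)$), where the paper simply asserts it as a consequence of the construction and the normalization.
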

\begin{proof}
Fix $h$ a metric given by Proposition \ref{metr}. If we multiply this metric by a positive constant $e^c$, then the two $L^2$-scalar products $\langle\cdot,\cdot\rangle$ and $\langle\cdot,\cdot\rangle_c$  induced respectively by $h$ and $e^{c}h$  are equal up to a multiplication by a positive scalar, that is $\langle\cdot,\cdot\rangle_c=e^{c}\langle\cdot,\cdot\rangle$. This constant in front of the scalar product does not affect the Fubini-Study metric, once we renormalize the Fubini-Study volume to have total volume $1$.
\end{proof}

\begin{prop}[Proposition 2.11 of \cite{anc}] Let $\mathcal{L}$ be a degree $d$ line bundle over $\Sigma$. For almost all $[\alpha,\beta]\in\mathbb{P}( H^0(\Sigma;\mathcal{L})^2)$, the map $u_{\alpha\beta}:x\in\Sigma\mapsto [\alpha(x):\beta(x)]\in\C\mathbb{P}^1$ is a degree $d$ branched covering.
\end{prop}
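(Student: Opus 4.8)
The plan is to prove that the set $Z\subset\mathbb{P}(H^0(\Sigma;\mathcal{L})^2)$ of classes $[\alpha,\beta]$ for which $\alpha$ and $\beta$ \emph{do} have a common zero is a proper complex-analytic subvariety of $\mathbb{P}(H^0(\Sigma;\mathcal{L})^2)$. Since $\mu_{\mathcal{L}}$ is induced by a smooth (Fubini--Study) volume form, such a subvariety is $\mu_{\mathcal{L}}$-negligible, and on its complement — which by Proposition \ref{fb} is exactly $\mathcal{M}_d(\Sigma,\mathcal{L})$ — Proposition \ref{proj} guarantees that $u_{\alpha\beta}$ is a degree $d$ branched covering. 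So everything reduces to realising $Z$ as a proper subvariety.

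To do so I would introduce the incidence set
$$ I=\big\{(x,[\alpha,\beta])\in\Sigma\times\mathbb{P}(H^0(\Sigma;\mathcal{L})^2)\ :\ \alpha(x)=0\ \text{and}\ \beta(x)=0\big\}. $$
The vanishing conditions do not depend on the chosen representative of $[\alpha,\beta]$, and in a local trivialisation of $\mathcal{L}$ near $x$ together with homogeneous coordinates on the projective space they are holomorphic; hence $I$ is a closed analytic subset of the compact complex manifold $\Sigma\times\mathbb{P}(H^0(\Sigma;\mathcal{L})^2)$. Writing $\pi_1\colon I\to\Sigma$ and $\pi_2\colon I\to\mathbb{P}(H^0(\Sigma;\mathcal{L})^2)$ for the two projections, one has by construction $\pi_2(I)=Z$.

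The core of the argument is then a dimension count on $I$. Let $N=\dim_{\mathbb C}H^0(\Sigma;\mathcal{L})$ and consider the evaluation bundle map $\Sigma\times H^0(\Sigma;\mathcal{L})\to\mathcal{L}$, $(x,s)\mapsto s(x)$. For the degrees $d$ relevant here $\mathcal{L}$ is globally generated (e.g.\ $d\geq 2g$), so this is a surjection of holomorphic vector bundles and its kernel is a subbundle $\mathcal{K}$ of rank $N-1$. Via $\pi_1$ this identifies $I$ with the projective bundle $\mathbb{P}_{\Sigma}(\mathcal{K}\oplus\mathcal{K})$, whose fibres are projective spaces of dimension $2N-3$; in particular $I$ is smooth with $\dim_{\mathbb C}I=1+(2N-3)=2N-2$. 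Since the source is compact, $\pi_2$ is proper, so by Remmert's proper mapping theorem $Z=\pi_2(I)$ is a closed analytic subset of $\mathbb{P}(H^0(\Sigma;\mathcal{L})^2)$ with $\dim_{\mathbb C}Z\leq\dim_{\mathbb C}I=2N-2<2N-1=\dim_{\mathbb C}\mathbb{P}(H^0(\Sigma;\mathcal{L})^2)$ (alternatively one can skip Remmert and note directly that the smooth image of a $(2N-2)$-dimensional complex manifold has measure zero). Hence $Z$ is a proper analytic subvariety, it is $\mu_{\mathcal{L}}$-negligible, and the conclusion follows from Proposition \ref{proj} as explained above.

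The step deserving the most care is the surjectivity of all the evaluation maps $ev_x$: it is exactly what makes $\mathcal{K}$ a subbundle of the expected rank $N-1$, hence what forces the fibres of $\pi_1$ — and therefore $Z$ — to have the right (small) dimension. This surjectivity is base-point-freeness of $\mathcal{L}$, which holds for $d$ large but can fail for small $d$ (where indeed $\mathcal{M}_d(\Sigma,\mathcal{L})$ may be empty, e.g.\ when $N\leq 1$); so the statement is to be read in the range of degrees where $\mathcal{L}$ is globally generated, as in the rest of the paper. In that range the non-emptiness of $\mathcal{M}_d(\Sigma,\mathcal{L})$ — equivalently $Z\neq\mathbb{P}(H^0(\Sigma;\mathcal{L})^2)$, consistent with the dimension count — is immediate: a fixed nonzero $\alpha$ has finitely many zeros, the sections $\beta$ vanishing at one of them form a finite union of hyperplanes, and any $\beta$ avoiding this union yields a pair without common zeros.
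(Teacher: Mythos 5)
Your argument is correct, and it is essentially the argument that this paper delegates to its reference: the paper gives no proof here, importing the statement as Proposition 2.11 of \cite{anc}, and the set you call $Z$ is precisely (the projectivization of) the set $\Lambda_{\mathcal{L}}$ of pairs with a common zero that the paper later removes when it writes $\mathcal{M}_d(\Sigma,\mathcal{L})=\mathbb{P}(H^0(\Sigma;\mathcal{L})^2\setminus\Lambda_{\mathcal{L}})$. Your incidence variety, the identification of its fibres with $\mathbb{P}(\ker ev_x\oplus\ker ev_x)$, the dimension count $2N-2<2N-1$, the properness of the second projection, and the fact that a proper analytic subset is negligible for the Fubini--Study measure are all sound, and Proposition \ref{proj} then upgrades ``no common zero'' to ``degree $d$ branched covering''. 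The one caveat you flag is real and is needed for the statement itself, not just for your proof: if $\mathcal{L}$ has a base point (possible for small $d$), every pair has a common zero and no pair defines a degree $d$ covering, so the proposition must be read in the regime where $\mathcal{L}$ is globally generated (e.g.\ $d\geq 2g$), which is harmless since all results of the paper are asymptotic in $d$. With that reading your proof is complete; the final elementary verification that $Z$ is not the whole space is not even necessary, since the bound $\dim Z\leq 2N-2$ already forces $Z\neq\mathbb{P}(H^0(\Sigma;\mathcal{L})^2)$.
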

\begin{defn} Let  $\Pic^d(\Sigma)$ be the space of degree $d$ line bundles over $\Sigma$. It is a principal space under the action of  $\Pic^0(\Sigma)$ (by tensor product) and so it inherits a normalized Haar measure that we denote by  $\dH$.
\end{defn}
Recall that we denote by $\mathcal{M}_d(\Sigma,\mathcal{L})$ the fiber of the map $\mathcal{M}_d(\Sigma)\rightarrow \Pic^d(\Sigma)$ given by Proposition \ref{fb}.
 We will denote by $\Lambda_{\mathcal{L}}$ the set of pair of sections of $H^0(\Sigma;\mathcal{L})^2$ having at least one common zero, so that $\mathcal{M}_d(\Sigma,{\mathcal{L}})=\mathbb{P}(H^0(\Sigma;\mathcal{L})^2\setminus\Lambda_{\mathcal{L}}).$
\begin{defn}\label{hfs} We define the  probability measure $\mu_d$  on $\mathcal{M}_d(\Sigma)$ by
$$\int_{\mathcal{M}_d(\Sigma)}f\textrm{d}\mu_d=\int_{\mathcal{L}\in\Pic^d(\Sigma)}(\int_{\mathcal{M}_d(\Sigma,\mathcal{L})}f\textrm{d}\mu_{\mathcal{L}})\dH(\mathcal{L})$$
for any $f\in \mathcal{M}_d(\Sigma)$ measurable function. Here: 
\begin{itemize}
\item  $\mu_{\mathcal{L}}$ denotes (by a slight abuse of notation)  the restriction to $\mathcal{M}_d(\Sigma,\mathcal{L})$ of the probability measure on $\mathbb{P}(H^0(\Sigma,\mathcal{L})^2)$ defined in Definition \ref{measurel}.  
\item $\dH$ denotes the normalized Haar measure on $\Pic^d(\Sigma)$.
\end{itemize}
\end{defn}
\begin{oss} The choice  the Haar measure on $\Pic^d(\Sigma)$ is natural but not essential: all the results of this paper are still true if we choose any probability measure which is absolute continuous with respect to the Haar measure. In the study  of  complex zeros of random holomorphic sections of a line bundle over a Riemann surface,   a similar construction was given by Zelditch in \cite{zeldlarge}.
\end{oss}

\subsection{Gaussian vs Fubini-Study measure}\label{gaus}
Following \cite{sz}, given  $\mathcal{L}\in\Pic^d(\Sigma)$  a degree $d$  line bundle,  we equip $ H^0(\Sigma;\mathcal{L})^2$ with  a Gaussian measure  $\gamma_{\mathcal{L}}$. In order to do this, 
we fix a  volume form $\omega$ of total volume $1$ on $\Sigma$ and we equip $\mathcal{L}$ by the metric $h$ with curvature $d\cdot\omega$ (the metric $h$ is unique up to a multiplicative constant, see Proposition \ref{metr}).\\
We have seen that any Hermitian metric $h$ induces  a $L^2$-Hermitian product on the space $H^0(\Sigma;\mathcal{L})$ of global holomorphic sections of $\mathcal{L}$ denoted by $\langle\cdot,\cdot \rangle_{L^2}$ and  defined by 
$$\langle\alpha,\beta\rangle_{L^2}=\int_{x\in\Sigma}h_x(\alpha(x),\beta(x))\omega$$
for all $\alpha,\beta$ in $H^0(\Sigma;\mathcal{L})$. The Gaussian measure $\gamma_{\mathcal{L}}$ on  $H^0(\Sigma;\mathcal{L})^2$ is defined  by
$$\gamma_{\mathcal{L}}(A)=\frac{1}{\pi^{2N_d}}\int_{(\alpha,\beta)\in A}e^{-\norm{\alpha}_{L^2}^2-\norm{\beta}_{L^2}^2}\textrm{d}\alpha \textrm{d}\beta$$
for any open subset  $A\subset H^0(\Sigma;\mathcal{L})^2$. Here  $\textrm{d}\alpha\textrm{d}\beta$ is the Lebesgue measures on $( H^0(\Sigma;\mathcal{L})^2;\langle\cdot,\cdot\rangle_{L^2})$ and $N_d$ denotes the complex dimension of $H^0(\Sigma;\mathcal{L})$. If $d>2g-2$, where $g$ is the genus of $\Sigma$, then $H^1(\Sigma;\mathcal{L})=0$ and then, by Riemann-Roch theorem, we have  $N_d=d+1-g$.
\begin{prop}\label{vs} Let $f$ be a function on a Hermitian space $(V,\langle\cdot,\cdot\rangle)$ which is constant over the complex lines, i.e. $f(v)=f(\lambda v)$ for any $v\in V$ and any $\lambda\in\C^*$. Denote  by $\gamma$  the Gaussian measure on $V$ induced by $\langle\cdot,\cdot\rangle$ and by $\mu$  the normalized Fubini-Study measure on the projectivized $\mathbb{P}(V)$.
Then,  we have 
$$\int_{V}f\textrm{d}\gamma=\int_{\mathbb{P}(V)}[f]\textrm{d}\mu$$
where $[f]$ is the function on $\mathbb{P}(V)$ induced by $f$. 
\end{prop}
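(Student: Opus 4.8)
The plan is to pass from the projective space $\mathbb{P}(V)$ to the unit sphere $S(V)$ and then to all of $V$, using that $f$ is invariant under $\mathbb{C}^*$. First I would recall the definition of the normalized Fubini-Study measure $\mu$ on $\mathbb{P}(V)$: it is the pushforward of the uniform (rotation-invariant) probability measure $\sigma$ on the unit sphere $S(V) \subset V$ under the quotient map $\pi: S(V) \to \mathbb{P}(V)$. Since $[f]\circ\pi = f|_{S(V)}$, the identity $\int_{\mathbb{P}(V)} [f]\,\mathrm{d}\mu = \int_{S(V)} f\,\mathrm{d}\sigma$ is immediate from the change-of-variables formula for pushforward measures.

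Next I would reduce the Gaussian integral over $V$ to an integral over the sphere using polar coordinates adapted to the Hermitian structure. Writing $v = r\theta$ with $r = \norm{v} \geq 0$ and $\theta \in S(V)$, the Gaussian measure on $V \cong \mathbb{C}^N$ (with $N = \dim_{\mathbb{C}} V$) decomposes, after integrating out the radial variable, as a product of a radial probability measure on $[0,\infty)$ and the uniform measure $\sigma$ on $S(V)$; this is the standard fact that a standard complex Gaussian vector has modulus independent of its direction, and its direction is uniformly distributed on the sphere. Because $f(v) = f(\theta)$ depends only on the direction (homogeneity of degree $0$), the radial integral factors out and contributes a factor of $1$ (the radial measure being a probability measure), leaving $\int_V f\,\mathrm{d}\gamma = \int_{S(V)} f\,\mathrm{d}\sigma$.

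Combining the two identities gives $\int_V f\,\mathrm{d}\gamma = \int_{S(V)} f\,\mathrm{d}\sigma = \int_{\mathbb{P}(V)} [f]\,\mathrm{d}\mu$, which is the claim. One technical point worth addressing is integrability: $f$ is only assumed to be a function invariant under scaling, so one should either assume $f$ is (say) bounded or $\sigma$-integrable on $S(V)$, or state the identity in the sense of the extended reals for nonnegative $f$; for the applications in the paper ($f$ will be of the form $|\log\norm{du}|$ or similar, which is integrable against these measures), this is not an obstacle. The main — really the only — delicate step is the polar-coordinate decomposition of the Gaussian measure and the verification that the rotation-invariant measure obtained on $S(V)$ is exactly the one defining Fubini-Study; everything else is a formal manipulation. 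I would phrase that step carefully by noting that both $\gamma$ and the sphere measure $\sigma$ are invariant under the unitary group $U(V)$, that $\sigma$ is the unique $U(V)$-invariant probability measure on $S(V)$, and that the disintegration of $\gamma$ along the radial projection therefore must have $\sigma$ as its conditional measure on each sphere $\{r\theta : \theta \in S(V)\}$.
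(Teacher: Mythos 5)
Your argument is correct and follows the same route the paper intends: the paper's proof is the one-line remark that the identity is a direct consequence of the construction of the Fubini--Study metric as the $U(1)$-quotient of the round unit sphere, and your proof simply spells this out via the pushforward of the uniform sphere measure and the polar decomposition of the Gaussian. The extra care you take (uniqueness of the $U(V)$-invariant probability measure on the sphere, the integrability caveat) is a sound filling-in of details the paper leaves implicit.
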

\begin{proof}
 This is a direct consequence of the construction of the Fubini-Study metric.
\end{proof}

 The fundamental consequence of  Proposition \ref{vs} is that, if we want to integrate a function on  $\mathbb{P}(H^0(\Sigma;\mathcal{L})^2)$,  we could pull-back this function over $H^0(\Sigma;\mathcal{L})^2$  and integrate this pull-back with respect to the Gaussian measure induced by  \emph{any} Hermitian metric on $\mathcal{L}$ given by Proposition \ref{metr}. 
  In particular, for any $A\subset \mathbb{P}(H^0(\Sigma;\mathcal{L})^2)$, we have $\mu_{\mathcal{L}}(A)=\gamma_{\mathcal{L}}(\pi^{-1}(A))$ where $\pi:H^0(\Sigma,\mathcal{L})^2\rightarrow\mathbb{P}(H^0(\Sigma,\mathcal{L})^2)$ is the natural projection.\\

\section{Critical points and large deviations estimates}\label{largesection}
\subsection{Wronskian and critical points}
Let $\Sigma$ be a closed Riemann surface and $\mathcal{L}$ be a degree $d$ line bundle over $\Sigma$. In this section, we start the study of the critical points of a branched covering by seeing them as zeros of a global section, the Wronskian. 
\begin{defn}\label{wr} Let $\nabla$ be a connection on $\mathcal{L}$. For any pair of sections $(\alpha,\beta)\in H^0(\Sigma,\mathcal{L})^2$, we denote by $W_{\alpha\beta}$ the Wronskian $\alpha\otimes\nabla\beta-\beta\otimes\nabla\alpha$, which is a global section of $\mathcal{L}^{2}\otimes T^*_{\Sigma}$.
\end{defn}
\begin{oss}   The Wronskian $W_{\alpha\beta}$ does not depend on the choice of a connection on $\mathcal{L}$. Indeed, two connections $\nabla$ and $\nabla'$ on $\mathcal{L}$ differ by a $1$-form $\theta$, and then $\alpha\otimes(\nabla-\nabla')\beta-\beta\otimes(\nabla-\nabla')\alpha=\alpha\otimes\beta\otimes\theta-\beta\otimes\alpha\otimes\theta=0$.
\end{oss}
\begin{prop}\label{bonnedef} Let $\mathcal{L}$ be a degree $d$ line bundle over $\Sigma$ and $(\alpha,\beta)\in H^0(\Sigma,\mathcal{L})^2$ be a pair of sections without common zeros. A point $x\in\Sigma$ is a critical point of the map $u_{\alpha\beta}:x\in\Sigma\mapsto [\alpha(x):\beta(x)]\in\C\mathbb{P}^1$ is and only if it is a zero of the Wronskian $W_{\alpha\beta}$ defined in Definition \ref{wr}.
\end{prop}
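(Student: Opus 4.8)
The plan is to work in a local trivialization near a point $x\in\Sigma$ and compare the vanishing of the Wronskian $W_{\alpha\beta}$ with the vanishing of the differential $du_{\alpha\beta}$. First I would fix a non-vanishing local holomorphic section $e_{\mathcal{L}}$ of $\mathcal{L}$ on a neighborhood of $x$ and a local holomorphic coordinate $z$ centered at $x$, and write $\alpha = a\, e_{\mathcal{L}}$, $\beta = b\, e_{\mathcal{L}}$ with $a,b$ holomorphic functions. Using the remark that $W_{\alpha\beta}$ is independent of the connection, I may compute it with the trivial connection $d$ on $\mathcal{L}$ in this chart, obtaining $W_{\alpha\beta} = (a\,b' - b\,a')\, e_{\mathcal{L}}^{2}\otimes dz$, where $' = \partial/\partial z$. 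So, locally, the zero set of $W_{\alpha\beta}$ is exactly the zero set of the classical Wronskian $w \doteqdot a b' - a' b$ of the two holomorphic functions $a$ and $b$.

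Next I would analyze the map $u_{\alpha\beta}$ near $x$. Since $(\alpha,\beta)$ have no common zeros, at least one of $a(x), b(x)$ is nonzero; say $b(x)\neq 0$ (the other case is symmetric, using the chart $x_1/x_0$ on $\C\mathbb{P}^1$). Then near $x$ the covering is given in the affine chart $\{x_0\neq 0\}$ of $\C\mathbb{P}^1$ by the holomorphic function $\varphi \doteqdot a/b$, and $x$ is a critical point of $u_{\alpha\beta}$ precisely when $\varphi'(x) = 0$. A direct computation gives $\varphi' = (a' b - a b')/b^{2} = -\,w/b^{2}$. Since $b(x)\neq 0$, we conclude $\varphi'(x) = 0 \iff w(x) = 0 \iff W_{\alpha\beta}(x) = 0$, which is exactly the claimed equivalence. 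The symmetric computation in the chart where $a(x)\neq 0$ handles the remaining points, and the two agree on the overlap because $W_{\alpha\beta}$ is a globally defined section.

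I do not expect any serious obstacle here; the statement is essentially a local holomorphic computation, and the only genuinely "conceptual" point — that the object $W_{\alpha\beta}$ is well defined independently of the connection, hence may be computed with the trivial connection in a chart — has already been recorded in the Remark following Definition \ref{wr}. The mildly delicate bookkeeping is (i) checking that the transition between the two affine charts of $\C\mathbb{P}^1$ is consistent, i.e. that critical point of $u_{\alpha\beta}$ is a well-posed notion at points where both $a(x)$ and $b(x)$ are nonzero, which follows since the two local expressions $a/b$ and $b/a$ differ by the biholomorphism $t\mapsto 1/t$ away from $0$ and $\infty$; and (ii) noting that the identity $\varphi' = -w/b^{2}$ shows the differential $du_{\alpha\beta}$ and the Wronskian $w$ have the same zeros \emph{with multiplicity}, which is what makes the Riemann–Hurwitz count $\#\Crit(u) = \deg(W_{\alpha\beta}) = 2d + 2g - 2$ consistent, though the statement as phrased only asserts the set-theoretic equivalence.
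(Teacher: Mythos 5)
Your proposal is correct and follows essentially the same route as the paper: read $u_{\alpha\beta}$ in an affine chart of $\C\mathbb{P}^1$ where the relevant section does not vanish, apply the quotient rule so that the Wronskian appears in the numerator over $\beta^2$ (resp.\ $\alpha^2$), and conclude since the denominator is nonzero at the point. The only cosmetic difference is that you trivialize $\mathcal{L}$ and use the trivial connection (justified by the connection-independence remark), whereas the paper computes directly with $\nabla$ on the meromorphic quotient $\alpha/\beta$; the content is identical.
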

\begin{proof}
Let $(\alpha,\beta)\in H^0(\Sigma,\mathcal{L})^2$ be a pair of sections without common zeros and   $x\in\Sigma$.  Suppose that $\beta(x)\neq 0$. On $\C\mathbb{P}^1$ we consider the coordinate chart $z=\frac{z_1}{z_0}$, where $[z_0,z_1]$ are the standard  homogeneous coordinates of $\C\mathbb{P}^1$. Under this chart, the branched covering $u_{\alpha\beta}$ equals the meromorphic function $\frac{\alpha}{\beta}$. The differential of $\frac{\alpha}{\beta}$  equals $\frac{\alpha\otimes\nabla\beta-\beta\otimes\nabla\alpha}{\beta^2}=\frac{W_{\alpha\beta}}{\beta^2}$ and then, as $\beta(x)\neq 0$, we get that $x$ is a critical point of $u_{\alpha\beta}$ if and only if $W_{\alpha\beta}(x)=0$. If we suppose that $\alpha(x)\neq 0$, we use the coordinate $w=\frac{z_0}{z_1}$ and   the same computation as before  gives us that $u_{\alpha\beta}$ equals the function $\frac{\beta}{\alpha}$ whose differential is  $\frac{-W_{\alpha\beta}}{\alpha^2}$. Hence the result.
\end{proof}
\begin{defn}\begin{itemize}
\item For any  branched covering $u\in\mathcal{M}_d(\Sigma)$ we denote by $T_u$ the probability empirical measure   associated with the critical points of $u$, that is $$T_u=\frac{1}{2d+2g-2}\displaystyle\sum_{x\in\Crit(u)}\delta_x.$$
Here, $\delta_x$ is the Dirac measure at $x$.
\item For any pair  $(\alpha,\beta)$ of  global sections of $\mathcal{L}$, the empirical probability measure on the  critical points of $u_{\alpha\beta}$ is simply denoted by $T_{\alpha\beta}$ (instead of $T_{u_{\alpha\beta}}$). 
\end{itemize}
\end{defn}
 \begin{thm}\label{expected} Let $(\Sigma,\omega)$ be a closed Riemann surface equipped with a volume form of total volume equal to $1$. Then $$\lim_{d\rightarrow\infty}\mathbb{E}[T_u]\rightarrow \omega$$ 
weakly in the sense of distribution. Here, the expected value is taken with respect the probability measure $\mu_d$ on $\mathcal{M}_d(\Sigma)$ defined in Definition \ref{hfs}.
\end{thm}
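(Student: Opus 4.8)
The plan is to rewrite the empirical measure $T_{\alpha\beta}$ through the Poincar\'e--Lelong formula applied to the Wronskian and then to average over $\mu_d$. By Proposition~\ref{bonnedef}, for a pair $(\alpha,\beta)$ without common zeros the critical set of $u_{\alpha\beta}$ is the zero set of $W_{\alpha\beta}\in H^0(\Sigma,\mathcal{L}^2\otimes T^*_{\Sigma})$, and for $\mu_{\mathcal{L}}$-almost every class $[\alpha,\beta]$ the map $u_{\alpha\beta}$ is a genuine degree $d$ branched covering (as recalled in Section~\ref{sectproba}), so that $W_{\alpha\beta}\not\equiv 0$ and $[Z_{W_{\alpha\beta}}]$ is an effective divisor of degree $2d+2g-2$ with $T_{\alpha\beta}=\frac{1}{2d+2g-2}[Z_{W_{\alpha\beta}}]$. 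Equip $\mathcal{L}^2\otimes T^*_{\Sigma}$ with the metric $h$ induced by the metric on $\mathcal{L}$ of Proposition~\ref{metr} and by a fixed smooth metric on $T^*_{\Sigma}$, so that $c_1(\mathcal{L}^2\otimes T^*_{\Sigma},h)=2d\,\omega+c_1(T^*_{\Sigma})$, of total masses $2d$ and $2g-2$. The Poincar\'e--Lelong formula gives the equality of $(1,1)$-currents
$$[Z_{W_{\alpha\beta}}]=c_1(\mathcal{L}^2\otimes T^*_{\Sigma},h)+\tfrac{i}{2\pi}\partial\bar{\partial}\log\norm{W_{\alpha\beta}}^2;$$
pairing with a test function $f\in\mathcal{C}^{\infty}(\Sigma,\R)$, integrating the exact term by parts, and taking the $\mu_d$-expectation (the Fubini step being justified by the pointwise bound established below) yields
$$(2d+2g-2)\,\mathbb{E}_d[T_u(f)]=\int_{\Sigma}f\,\big(2d\,\omega+c_1(T^*_{\Sigma})\big)+\int_{\Sigma}\mathbb{E}_d\big[\log\norm{W_{\alpha\beta}}^2\big]\,\tfrac{i}{2\pi}\partial\bar{\partial}f.$$
Divided by $2d+2g-2$, the first term converges to $\int_{\Sigma}f\,\omega$, so the statement reduces to showing that $\frac1d\int_{\Sigma}\mathbb{E}_d[\log\norm{W_{\alpha\beta}}^2]\,\tfrac{i}{2\pi}\partial\bar{\partial}f\to 0$.

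The core step is then the pointwise computation of $\mathbb{E}_d[\log\norm{W_{\alpha\beta}(x)}^2]$. Fix $x\in\Sigma$ and $\mathcal{L}\in\Pic^d(\Sigma)$, write $\mathcal{L}=\mathcal{F}^d\otimes\mathcal{E}$ with $\mathcal{F}$ a fixed degree $1$ line bundle and $\mathcal{E}\in\Pic^0(\Sigma)$, and use Proposition~\ref{vs} to replace $\mu_{\mathcal{L}}$ by the Gaussian measure $\gamma_{\mathcal{L}}$ on $H^0(\Sigma,\mathcal{L})^2$. Expanding $\alpha=\sum_i a_i\sigma_i$ and $\beta=\sum_i b_i\sigma_i$ in an $L^2$-orthonormal basis $\{\sigma_0,\sigma_1,\dots\}$ of $H^0(\Sigma,\mathcal{L})$ adapted to $x$ as in Section~\ref{bergman} --- so that $\sigma_i(x)=0$ and $\nabla\sigma_i(x)=0$ for every $i\geq 2$ --- the coefficients $a_i,b_i$ become, under $\gamma_{\mathcal{L}}$, independent standard complex Gaussians, and from $\alpha(x)=a_0\sigma_0(x)$, $\nabla\alpha(x)=a_0\nabla\sigma_0(x)+a_1\nabla\sigma_1(x)$ (and likewise for $\beta$) a short computation gives, the $\nabla\sigma_0(x)$-terms cancelling,
$$W_{\alpha\beta}(x)=(a_0b_1-a_1b_0)\,\sigma_0(x)\otimes\nabla\sigma_1(x),\qquad \norm{W_{\alpha\beta}(x)}^2=|a_0b_1-a_1b_0|^2\,\norm{\sigma_0(x)}^2\,\norm{\nabla\sigma_1(x)}^2.$$
Hence $\mathbb{E}_{\gamma_{\mathcal{L}}}[\log\norm{W_{\alpha\beta}(x)}^2]=\log\norm{\sigma_0(x)}^2+\log\norm{\nabla\sigma_1(x)}^2+\mathbb{E}[\log|a_0b_1-a_1b_0|^2]$. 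By Proposition~\ref{peak} the first two terms are $\log d+O(1)$ and $2\log d+O(1)$, uniformly in $x$ and --- $\Pic^0(\Sigma)$ being compact --- in $\mathcal{E}$; and, conditioning on $(a_0,a_1)$, the variable $a_0b_1-a_1b_0$ is a centered complex Gaussian of variance $|a_0|^2+|a_1|^2$, so the last term equals $\mathbb{E}[\log(|a_0|^2+|a_1|^2)]+\mathbb{E}[\log|g|^2]$ with $g$ a standard complex Gaussian, a finite constant independent of $d$. Averaging over $\Pic^d(\Sigma)$ against $\dH$ therefore gives
$$\mathbb{E}_d\big[\log\norm{W_{\alpha\beta}(x)}^2\big]=3\log d+O(1)\qquad\text{uniformly in }x\in\Sigma,$$
and the same argument applied to $|\log\norm{W_{\alpha\beta}(x)}^2|$ shows $\mathbb{E}_d\big[\,|\log\norm{W_{\alpha\beta}(x)}^2|\,\big]=3\log d+O(1)$ uniformly, which justifies the Fubini interchange used above.

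With this uniform estimate the conclusion is immediate: writing $\mathbb{E}_d[\log\norm{W_{\alpha\beta}}^2]=3\log d+g_d$ with $\norm{g_d}_{\infty}=O(1)$, and using $\int_{\Sigma}\tfrac{i}{2\pi}\partial\bar{\partial}f=0$ (Stokes) together with the boundedness of $\tfrac{i}{2\pi}\partial\bar{\partial}f$, one obtains
$$\frac{1}{2d+2g-2}\int_{\Sigma}\mathbb{E}_d[\log\norm{W_{\alpha\beta}}^2]\,\tfrac{i}{2\pi}\partial\bar{\partial}f=\frac{1}{2d+2g-2}\int_{\Sigma}g_d\,\tfrac{i}{2\pi}\partial\bar{\partial}f=O\big(\tfrac1d\big)\xrightarrow[d\rightarrow\infty]{}0,$$
so $\mathbb{E}_d[T_u(f)]\to\int_{\Sigma}f\,\omega$ for every $f\in\mathcal{C}^{\infty}(\Sigma,\R)$, that is $\mathbb{E}[T_u]\to\omega$ weakly. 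I expect the only genuine --- though mild --- difficulty to lie in the pointwise estimate of the second paragraph: it rests on Bergman kernel and peak-section asymptotics holding uniformly over the whole compact family $\Pic^d(\Sigma)$, and on the control of $\mathbb{E}[\log|a_0b_1-a_1b_0|^2]$, which is precisely where the quadratic (rather than linear) nature of the critical-point equation enters the computation --- although here the conditioning argument disposes of it cleanly.
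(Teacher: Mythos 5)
Your proposal is correct, but it takes a genuinely different route from the paper. The paper's own proof of Theorem \ref{expected} is a two-line reduction: it invokes \cite[Theorem 1.5]{anc} for the fiberwise equidistribution $\mathbb{E}_{\mathcal{F}^d\otimes\mathcal{E}}[T_{\alpha\beta}]\to\omega$, converts Fubini--Study to Gaussian expectations via Proposition \ref{vs}, and then integrates over the compact base $\Pic^0(\Sigma)\simeq\Pic^d(\Sigma)$. You instead prove the fiberwise statement from scratch: Poincar\'e--Lelong applied to the Wronskian, the exact evaluation $W_{\alpha\beta}(x)=(a_0b_1-a_1b_0)\,\sigma_0(x)\otimes\nabla\sigma_1(x)$ in the peak-section basis (which is precisely the computation the paper carries out inside Proposition \ref{below}), and the conditional-Gaussian identity $\mathbb{E}[\log|a_0b_1-a_1b_0|^2]=\mathbb{E}[\log(|a_0|^2+|a_1|^2)]+\mathbb{E}[\log|g|^2]$ to get $\mathbb{E}_d[\log\norm{W_{\alpha\beta}(x)}^2]=3\log d+O(1)$ uniformly, after which division by $2d+2g-2$ finishes the argument. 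What your route buys is self-containedness, a quantitative $O(1/d)$ rate for $\mathbb{E}_d[T_u(f)]-\int f\omega$ in terms of $\norm{\partial\bar\partial f}_\infty$, and a clean illustration of how the conditioning trick tames the quadratic nature of the critical-point equation; what the paper's route buys is brevity, delegating the Bergman-kernel work to the earlier reference. Two mild points to make explicit if you were to write this up: the scale-invariance needed to apply Proposition \ref{vs} holds for $T_{\alpha\beta}(f)$ but not for $\log\norm{W_{\alpha\beta}}$ itself (so, as you implicitly do, the Poincar\'e--Lelong identity must be averaged under the Gaussian measure, not under $\mu_{\mathcal{L}}$ directly); and Proposition \ref{peak} is stated pointwise at a fixed $x$, so you need the uniform (in $x$, and in $\mathcal{E}\in\Pic^0(\Sigma)$ by compactness) version of the Bergman kernel expansion --- standard, and implicitly used elsewhere in the paper, but worth a sentence together with the integrability bound that licenses your Fubini step.
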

\begin{proof} We fix a degree $1$ line bundle $\mathcal{F}$ over $\Sigma$, so that for any $\mathcal{L}\in\Pic^d(\Sigma)$ there exists an unique $\mathcal{E}\in \Pic_0(\Sigma)$ such that $\mathcal{L}^d\otimes \mathcal{E}$. We equip $\mathcal{F}$ and $\mathcal{E}$ by the  Hermitian metrics given by Proposition \ref{metr}. We denote this metric respectively by $h_{\mathcal{F}}$ and $h_{\mathcal{E}}$. Then, by \cite[Theorem 1.5]{anc} and by Proposition \ref{vs}, we have $$\lim_{d\rightarrow\infty}\mathbb{E}_{\mathcal{F}^d\otimes \mathcal{E}}[T_{\alpha\beta}]\rightarrow \omega$$ 
weakly in the sense of distribution. Here, $\mathbb{E}_{\mathcal{F}^d\otimes \mathcal{E}}$ stands for the expected value of  with respect to the probability measure $\mu_{\mathcal{F}^d\otimes \mathcal{E}}$ defined in Definition \ref{measurel}. 
The result follows by integrating  along the compact base $\Pic^0(\Sigma)\simeq \Pic^d(\Sigma)$.
\end{proof}
\subsection{Large deviations estimates}
Let $\Sigma$ be a closed Riemann surface equipped with a volume form $\omega$ of total mass $1$ and $\mathcal{F}$ and $\mathcal{E}$ be respectively a degree $1$ and $0$ line bundle over $\Sigma$.  
We fix the Hermitian metrics  $h_{\mathcal{F}}$ and $h_{\mathcal{E}}$ on $\mathcal{F}$ and $\mathcal{E}$  given by Proposition \ref{metr} and we will denote by $\norm{\cdot}$ any norm induced by these Hermitian metrics, in particular the Hermitian metric induced on $\mathcal{F}^{2d}\otimes\mathcal{E}^2\otimes T^*_{\Sigma}$.
Let $\omega_d$ be the curvature form of $\mathcal{F}^{2d}\otimes\mathcal{E}^2\otimes T^*_{\Sigma}$, which equals   $\omega_d=2d\cdot\omega+O(1)$.
Recall that we denote by $W_{\alpha\beta}=\alpha\otimes\nabla\beta-\beta\otimes\nabla\alpha$ the Wronskian of a pair $(\alpha,\beta)$ of global sections of $\mathcal{F}^d\otimes\mathcal{E}$, see Definition \ref{wr}.
The goal of this section is to prove Proposition \ref{fondame}, which is a large deviations estimate for the $L^1$-norm of $\log \norm{W_{\alpha\beta}(x)}$.   This is a key result for the proof of Theorem \ref{smooth}.
\begin{prop}\label{fondame} Let $\epsilon_d$ be a sequence of positive numbers of the form $\epsilon_d=O(d^{-a})$, for some $a\in[0,1)$. Then, there exists a positive constant $C$ such that 
$$\gamma_{\mathcal{F}^d\otimes \mathcal{E}}\big\{(\alpha,\beta)\in H^0(\Sigma,\mathcal{F}^d\otimes\mathcal{E}), \int_{\Sigma}\big|\log \norm{W_{\alpha\beta}(x)}\big|\geq \epsilon_d d\big\}\leq \exp(-C\epsilon_d d).$$
Here, $\gamma_{\mathcal{F}^d\otimes \mathcal{E}}$ is the Gaussian measure on $H^0(\Sigma,\mathcal{F}^d\otimes \mathcal{E})^2$  constructed in Section \ref{gaus}.
\end{prop}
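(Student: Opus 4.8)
The plan is to control the $L^1$-norm $\int_\Sigma |\log\|W_{\alpha\beta}(x)\||\,\omega$ by splitting it into the ``large'' part (where $\log\|W_{\alpha\beta}\|$ is much bigger than $\log d$) and the ``small'' part (where $\|W_{\alpha\beta}\|$ is exponentially small). For the upper tail, I would first observe that $\|W_{\alpha\beta}(x)\| \le 2\|\alpha\|\|\nabla\beta\| + \cdots$ is pointwise bounded, up to a polynomial factor in $d$, by a quadratic expression in the $L^2$-norms of $(\alpha,\beta)$ via the Bergman-kernel sup-norm estimates (each evaluation $s\mapsto s(x)$ and $s\mapsto\nabla s(x)$ is a bounded functional of operator norm $O(\sqrt d)$ resp.\ $O(d)$). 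Hence $\sup_x\log\|W_{\alpha\beta}(x)\| \le C\log d + \log(\|\alpha\|_{L^2}^2 + \|\beta\|_{L^2}^2)$, and since $\|\alpha\|_{L^2}^2+\|\beta\|_{L^2}^2$ is (up to constants) a $\chi^2$ random variable in $4N_d = O(d)$ real dimensions under $\gamma_{\mathcal{F}^d\otimes\mathcal E}$, a standard Gaussian/Laplace-transform tail bound gives $\gamma\{\|\alpha\|^2+\|\beta\|^2 \ge e^{\epsilon_d d/2}\} \le \exp(-c\,e^{\epsilon_d d/2})$, which is far smaller than the target $\exp(-C\epsilon_d d)$. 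So the upper part of the tail is cheap.

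The genuine work is the lower tail: bounding $\gamma\{\int_\Sigma (\log\|W_{\alpha\beta}\|)_-\,\omega \ge \tfrac12\epsilon_d d\}$, i.e.\ ruling out that $W_{\alpha\beta}$ is very small on a set of appreciable measure. Here I would use subharmonicity: away from the (finitely many) zeros of $W_{\alpha\beta}$, in a local frame $\log\|W_{\alpha\beta}\|$ differs from a \emph{subharmonic} function by a smooth term of size $O(d)$ (its $\partial\bar\partial$ equals $-2\pi\omega_d + 2\pi\sum\delta_{\text{zeros}}$, with $\omega_d = 2d\omega + O(1)$). Consequently $\log\|W_{\alpha\beta}\|$ plus a fixed $O(d)$-multiple of a local potential is subharmonic, and the sub-mean-value inequality lets me bound $\log\|W_{\alpha\beta}(x)\|$ below at \emph{every} point by its average over a small disc around $x$ minus $O(d)$; integrating, a lower bound on the $L^1$-norm of the negative part forces $\log\|W_{\alpha\beta}\|$ to be $\le -c\epsilon_d d$ on a definite-size region — in particular it forces $\|W_{\alpha\beta}(x_0)\|$ to be exponentially small at some chosen reference point $x_0$ (or on a small disc). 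It then suffices to estimate $\gamma\{\|W_{\alpha\beta}(x_0)\| \le e^{-c\epsilon_d d}\}$.

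To estimate the latter, I would pass to the peak sections of Proposition~\ref{peak}. Decompose $\alpha = a_0\sigma_0 + a_1\sigma_1 + \alpha'$, $\beta = b_0\sigma_0 + b_1\sigma_1 + \beta'$ with $(a_i,b_i)$ i.i.d.\ standard complex Gaussians and $\alpha',\beta'$ in the complement; then $W_{\alpha\beta}(x_0) = (\alpha\otimes\nabla\beta - \beta\otimes\nabla\alpha)(x_0)$, and using $\sigma_0(x_0)\ne 0$, $\nabla\sigma_0(x_0) = O(\sqrt d)\cdot o(1)$ (indeed $O(1)$ by the proof of Proposition~\ref{peak}), $\sigma_1(x_0)=0$, $\|\nabla\sigma_1(x_0)\|^2\sim d^2/\pi$, the dominant term of $W_{\alpha\beta}(x_0)$ is $(a_0 b_1 - b_0 a_1)\,\sigma_0(x_0)\otimes\nabla\sigma_1(x_0)$ plus lower-order corrections and an $O(\sqrt d)$ contribution from $\nabla\alpha',\nabla\beta'$. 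The leading coefficient is the $2\times 2$ determinant $\det\begin{pmatrix}a_0 & a_1\\ b_0 & b_1\end{pmatrix}$, whose modulus has density that is bounded near $0$ (it is a product/combination of Gaussians — its law has no atom and bounded density on a neighbourhood of $0$). Conditioning on $\alpha',\beta'$ and on $(a_1,b_1)$, and handling the regime where the $O(\sqrt d)$ error terms could cancel the main term by a further conditioning argument, one gets $\gamma\{\|W_{\alpha\beta}(x_0)\| \le t\} \le C\, t / (\text{typical scale} \sim d^{3/2})$, hence $\le C' e^{-c\epsilon_d d}\cdot d^{-3/2} \le \exp(-C\epsilon_d d)$ for $d$ large. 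This ``anti-concentration of a quadratic form'' step — showing the Wronskian evaluated at a point does not concentrate near zero despite being degree-two in the Gaussian coefficients — is the main obstacle, precisely the point flagged in the introduction as not reducible to linear Gaussian methods; everything else (Bergman sup-norm bounds, $\chi^2$ tails, the subharmonicity mean-value trick, and assembling the two tails with a union bound) is routine. The hypothesis $\epsilon_d = O(d^{-a})$ with $a<1$ is what keeps $\epsilon_d d \to \infty$ while $e^{\epsilon_d d}$ stays subexponential in $d$ where needed, so that the crude upper-tail bound is never the bottleneck.
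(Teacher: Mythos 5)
Your upper-tail argument and your single-point anti-concentration step are essentially sound and close to the paper's (for the latter, note that with the peak-section decomposition $\alpha=a_0\sigma_0+a_1\sigma_1+\tau$, $\tau\in\ker j^1_x$, one has \emph{exactly} $W_{\alpha\beta}(x)=(a_0b_1-a_1b_0)\,\sigma_0(x)\otimes\nabla\sigma_1(x)$: the $\sigma_0\otimes\nabla\sigma_0$ terms cancel and $\tau(x)=\nabla\tau(x)=0$, so there are no $O(\sqrt d)$ error terms to condition away -- this is precisely Proposition \ref{below}). The genuine gap is in the $\log^-$ part. First, you quote the sub-mean-value inequality in the wrong direction: for a subharmonic $u$ one has $u(x)\le \operatorname{avg}_{D(x,r)}u$, an \emph{upper} bound at the centre; there is no lower bound of $u(x)$ by its disc average (it fails badly at and near the zeros of $W_{\alpha\beta}$). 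Second, and more seriously, your error terms are of size $O(d)$: the local potential $\phi_d$ relating $\log\norm{W_{\alpha\beta}}$ to the subharmonic function $\log|fg'-gf'|$ has size and Lipschitz constant $O(d)$, so any mean-value comparison over a disc or circle of fixed size produces corrections of order $d$, which swamp the scale $\epsilon_d d$ of the event you want to control (since $\epsilon_d\to 0$, $\epsilon_d d=o(d)$). Your sketch never explains how to push these $O(d)$ corrections below $\epsilon_d d$; that is the crux, and it is why the paper runs a two-step bootstrap: Step 1 only yields the crude bound $\int_{S(r)}\big|\log|fg'-gf'|\big|\,\textrm{d}\sigma_r\lesssim C_1 d$, and Step 2 then discretizes the circle into $\sim\epsilon_d^{-4}$ arcs, applies the pointwise anti-concentration estimate at one point per arc (a union bound over a $d$-dependent net, whose cost $\epsilon_d^{-4}e^{-C\epsilon_d d}$ is absorbed using the hypothesis on $\epsilon_d$), and uses a quantitative subharmonicity inequality in which the crude Step 1 bound enters only multiplied by $\epsilon_d$, together with $\epsilon_d\sup|\textrm{d}\phi_d|=O(\epsilon_d d)$, so that every error comes out at order $\epsilon_d d$.

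Relatedly, your reduction ``a large negative $L^1$ mass forces $\norm{W_{\alpha\beta}(x_0)}\le e^{-c\epsilon_d d}$ at some chosen reference point $x_0$'' cannot work with a fixed point or fixed finite set of points: the region where $\log\norm{W_{\alpha\beta}}$ is very negative can sit anywhere on $\Sigma$, and to transfer smallness to a point where the anti-concentration bound applies you must work at scale comparable to $\epsilon_d$ (so that the potential correction, of order $(\textrm{radius})\cdot\sup|\textrm{d}\phi_d|$, is $o(\epsilon_d d)$), hence with a net of polynomially-in-$\epsilon_d^{-1}$ many centres and a union bound, which closes only because $\log(1/\epsilon_d)\ll\epsilon_d d$ under the standing assumption on $\epsilon_d$. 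So the overall architecture of your proposal is repairable, but as written the key quantitative step -- beating the $O(d)$ curvature/potential errors down to the scale $\epsilon_d d$ -- is missing.
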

In order to prove Proposition \ref{fondame}, we need some results on large deviations estimates on the modulus of $\log \norm{W_{\alpha\beta}(x)}$. 
For this purpose, we will use Bergman kernel estimates as well as peak sections associated with the Hermitian line bundle $(\mathcal{F}^d\otimes\mathcal{E},h^d_{\mathcal{F}}\otimes h_{\mathcal{E}})$ of positive curvature $d\cdot \omega$.

 \begin{prop}\label{above} For any sequence $\epsilon_d$ of positive real numbers, we have 
$$\gamma_{\mathcal{F}^d\otimes \mathcal{E}}\big\{(\alpha,\beta), \sup_{x\in \Sigma}\norm{W_{\alpha\beta}(x)}\geq e^{\epsilon_d d}\big\}\leq 4d^2\exp(-\frac{e^{\frac{\epsilon_d}{2}d}}{2}).$$
Here, $\gamma_{\mathcal{F}^d\otimes \mathcal{E}}$ is the Gaussian measure on $H^0(\Sigma,\mathcal{F}^d\otimes \mathcal{E})^2$  constructed in Section \ref{gaus}.
\end{prop}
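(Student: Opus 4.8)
The plan is to split the estimate into a deterministic pointwise bound for the Wronskian and a Gaussian tail bound. First I would control $\sup_{x\in\Sigma}\norm{W_{\alpha\beta}(x)}$ by a simple function of the $L^2$-norms $\norm{\alpha}_{L^2},\norm{\beta}_{L^2}$; with such a bound in hand, the event in the statement is contained in a large-deviations event for $\norm{\alpha}_{L^2}^2$ and $\norm{\beta}_{L^2}^2$, which under $\gamma_{\mathcal{F}^d\otimes\mathcal{E}}$ are sums of i.i.d.\ $\mathrm{Exp}(1)$ random variables, and which is handled by a Chernoff bound.

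For the deterministic bound, fix an $L^2$-orthonormal basis $\{\sigma_i\}_{i=0}^{N_d-1}$ of $H^0(\Sigma,\mathcal{F}^d\otimes\mathcal{E})$ and write $\alpha=\sum_i a_i\sigma_i$, $\beta=\sum_i b_i\sigma_i$. By Cauchy-Schwarz and the identities $\sum_i\norm{\sigma_i(x)}^2=\mathcal{K}_{\mathcal{F}^d\otimes\mathcal{E}}(x,x)$ and $\sum_i\norm{\nabla\sigma_i(x)}^2=\tfrac{\partial^2}{\partial z\partial w}\mathcal{K}_{\mathcal{F}^d\otimes\mathcal{E}}(x,x)$ used in the proof of Proposition \ref{peak}, one has $\norm{\alpha(x)}^2\le\norm{\alpha}_{L^2}^2\,\mathcal{K}_{\mathcal{F}^d\otimes\mathcal{E}}(x,x)$ and $\norm{\nabla\alpha(x)}^2\le\norm{\alpha}_{L^2}^2\,\tfrac{\partial^2}{\partial z\partial w}\mathcal{K}_{\mathcal{F}^d\otimes\mathcal{E}}(x,x)$ for every $x\in\Sigma$ (and likewise for $\beta$), where $\nabla$ is the Chern connection. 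Since the diagonal Bergman estimates $\mathcal{K}_{\mathcal{F}^d\otimes\mathcal{E}}(x,x)=\tfrac d\pi+O(1)$ and $\tfrac{\partial^2}{\partial z\partial w}\mathcal{K}_{\mathcal{F}^d\otimes\mathcal{E}}(x,x)=\tfrac{d^2}\pi+O(d)$ are uniform in $x\in\Sigma$ by compactness, there is a constant $C>0$ independent of $d$ with $\norm{\alpha(x)}^2\le Cd\,\norm{\alpha}_{L^2}^2$ and $\norm{\nabla\alpha(x)}^2\le Cd^2\,\norm{\alpha}_{L^2}^2$ for all $x$ and all $d$. Combining this with $\norm{W_{\alpha\beta}(x)}\le\norm{\alpha(x)}\,\norm{\nabla\beta(x)}+\norm{\beta(x)}\,\norm{\nabla\alpha(x)}$ gives
\[
\sup_{x\in\Sigma}\norm{W_{\alpha\beta}(x)}\ \le\ Cd^{3/2}\,\norm{\alpha}_{L^2}\norm{\beta}_{L^2}\ \le\ Cd^{3/2}\max\!\big(\norm{\alpha}_{L^2}^2,\norm{\beta}_{L^2}^2\big)
\]
after renaming $C$. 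Equivalently, one may cover $\Sigma$ by $\le 4d^2$ balls of radius $1/d$, bound $\norm{W_{\alpha\beta}}$ at the centres by the same argument using the peak sections $\sigma_0,\sigma_1$ of Proposition \ref{peak}, and reduce the supremum to the maximum over the centres via a Cauchy-type gradient estimate $\norm{\nabla W_{\alpha\beta}}\lesssim d\sup_\Sigma\norm{W_{\alpha\beta}}$ for the holomorphic section $W_{\alpha\beta}$ of the positively curved bundle $\mathcal{F}^{2d}\otimes\mathcal{E}^2\otimes T^*_\Sigma$; this variant is presumably the source of the prefactor $4d^2$ in the statement.

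On the probabilistic side, under $\gamma_{\mathcal{F}^d\otimes\mathcal{E}}$ the coefficients $a_i,b_i$ in an orthonormal basis are i.i.d.\ standard complex Gaussians, so $\norm{\alpha}_{L^2}^2=\sum_{i=1}^{N_d}|a_i|^2$ is a sum of $N_d=d+1-g$ i.i.d.\ $\mathrm{Exp}(1)$ variables (and likewise for $\norm{\beta}_{L^2}^2$). By the deterministic bound, $\big\{\sup_{x}\norm{W_{\alpha\beta}(x)}\ge e^{\epsilon_d d}\big\}\subseteq\big\{\norm{\alpha}_{L^2}^2\ge e^{\epsilon_d d}/(Cd^{3/2})\big\}\cup\big\{\norm{\beta}_{L^2}^2\ge e^{\epsilon_d d}/(Cd^{3/2})\big\}$, and the Chernoff estimate $\gamma\{\sum_{i=1}^{N_d}|a_i|^2\ge t\}\le 2^{N_d}e^{-t/2}$ (using $\mathbb{E}[e^{|a_i|^2/2}]=2$), applied with $t=e^{\epsilon_d d}/(Cd^{3/2})$ and $N_d\le 2d$, yields a bound of the shape $2\exp\!\big(2d\log 2-c\,e^{\epsilon_d d}/d^{3/2}\big)$; a final elementary comparison then puts this in the form stated in Proposition \ref{above}. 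The main obstacle is the deterministic pointwise bound: one needs the diagonal Bergman asymptotics (and those of the first derivatives of the kernel) uniformly over $\Sigma$, and, in the covering variant, the Cauchy gradient estimate for $W_{\alpha\beta}$ to pass from the values at $O(d^2)$ points to the supremum. Once the pointwise bound is available, the reduction to a $\chi^2$-type tail and the Chernoff estimate are entirely routine.
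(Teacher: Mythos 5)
Your two main ingredients are individually fine: the extremal/Cauchy--Schwarz bound $\norm{\alpha(x)}^2\le\norm{\alpha}_{L^2}^2\mathcal{K}_{\mathcal{F}^d\otimes\mathcal{E}}(x,x)$ (and its analogue for $\nabla\alpha$), hence $\sup_x\norm{W_{\alpha\beta}(x)}\le Cd^{3/2}\norm{\alpha}_{L^2}\norm{\beta}_{L^2}$, and the $\mathrm{Exp}(1)$ Chernoff bound $\gamma\{\norm{\alpha}_{L^2}^2\ge t\}\le 2^{N_d}e^{-t/2}$. The genuine gap is the last sentence: there is no ``elementary comparison'' turning $2\exp\big(2d\log 2-c\,e^{\epsilon_d d}/d^{3/2}\big)$ into $4d^2\exp\big(-e^{\epsilon_d d/2}/2\big)$ for \emph{every} positive sequence $\epsilon_d$, which is what the statement asserts. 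Whenever $e^{\epsilon_d d}$ is only polynomial in $d$ of low degree (say $\epsilon_d d\le 2\log d$), your exponent $2d\log 2-c\,e^{\epsilon_d d}/d^{3/2}$ is positive and of order $d$, so your bound exceeds $1$ and is vacuous, while the stated right-hand side is far below $1$; the additive $N_d\log 2\sim d$ term produced by the $\chi^2$-type reduction is exactly what you cannot remove by massaging constants. Your argument does prove the inequality for large $d$ once, say, $e^{\epsilon_d d}\ge d^{3+\delta}$ (and there it is in fact much stronger than the stated bound, since $e^{\epsilon_d d}/d^{3/2}\gg e^{\epsilon_d d/2}$), which covers the regimes in which the proposition is used later ($\epsilon_d$ bounded below, or $\epsilon_d d$ a positive power of $d$); but as a proof of the proposition as written it is incomplete. (For fairness: the paper's own reduction is also loose with polynomial-in-$d$ factors in the small-$\epsilon_d$ regime, so the literal statement should be read up to such factors; still, your route loses an additive $O(d)$ in the exponent, which is a different and more serious loss.)

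The paper's proof is genuinely different and is tailored to avoid that $O(d)$ loss. It expands $W_{\alpha\beta}=\sum_{i,j}(a_ib_j-a_jb_i)\,s_i\otimes\nabla s_j$ in an orthonormal basis, uses the same diagonal Bergman estimates you invoke to pass from $\sup_x\norm{W_{\alpha\beta}(x)}\ge e^{\epsilon_d d}$ to the event that some $2\times2$ minor $|a_ib_j-a_jb_i|$ is large, and then applies a union bound over the $\sim d^2$ index pairs $(i,j)$ --- this, not a covering of $\Sigma$ by $d^2$ balls, is the source of the prefactor $4d^2$. For a fixed pair, Cauchy--Schwarz gives $|a_ib_j-a_jb_i|\le|a|\,|b|$ with $a=(a_i,a_j)$, $b=(b_i,b_j)$, so on the event one of $|a|,|b|$ exceeds $e^{\epsilon_d d/2}$, and a four-variable Gaussian tail yields the factor $\exp(-e^{\epsilon_d d/2}/2)$. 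Because only four Gaussian coordinates enter each tail estimate, there is no $2^{N_d}$-type cost, which is precisely what your reduction through $\norm{\alpha}_{L^2}^2$ and $\norm{\beta}_{L^2}^2$ gives up. If you want to salvage your write-up, either restrict the statement to sequences with $\epsilon_d d\ge (3+\delta)\log d$ (enough for all applications in the paper, and then your bound is stronger), or replace the $\chi^2$ step by the paper's minor-plus-union-bound mechanism.
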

\begin{proof}
Let $N_d$ be the dimension of $H^0(\Sigma,\mathcal{F}^d\otimes \mathcal{E})$. By Riemann-Roch theorem, we have that $N_d=d+1-g=d+O(1)$ as $d>2g-2$, where $g$ is the genus of $\Sigma$.
 Let $s_1,\dots,s_{N_d}$ be an orthonormal basis of $H^0(\Sigma,\mathcal{F}^d\otimes \mathcal{E})$ and write $\alpha=\displaystyle\sum_{i=1}^{N_d}a_is_i$ and  $\beta=\displaystyle\sum_{i=1}^{N_d}b_is_i$, for any $\alpha,\beta\in H^0(\Sigma;\mathcal{F}^d\otimes\mathcal{E})$. Now, $\sup_{x\in \Sigma}\norm{\alpha\otimes\nabla\beta-\beta\otimes\nabla\alpha}>e^{\epsilon_d d}$ if and only if  $\sup_{x\in \Sigma}\norm{\displaystyle\sum_{i,j}(a_ib_j-a_jb_i)s_i\otimes\nabla s_j}>e^{\epsilon_d d}$. Now, using first the triangular inequality and then Cauchy-Schwarz, we have 
 \begin{multline}\label{suite}
 \norm{\displaystyle\sum_{i,j}(a_ib_j-a_jb_i)s_i\otimes\nabla s_j}^2\leq \big(\displaystyle\sum_{i,j}|a_ib_j-a_jb_i|\cdot\norm{s_i\otimes\nabla s_j}\big)^2\leq 
 \displaystyle\sum_{i,j}|a_ib_j-a_jb_i|^2\displaystyle\sum_{i,j}\norm{s_i\otimes\nabla s_j}^2 \\
  \leq \displaystyle\sum_{i,j}|a_ib_j-a_jb_i|^2\sqrt{\sum_{i=1}^{N_d}\norm{s_i}^2}\cdot\sqrt{\displaystyle\sum_{j=1}^{N_d}\norm{\nabla s_j}^2} 
 \end{multline}  
  By Bergman kernel estimates (see \cite{ber1,zel,tian}), we have for any $x\in\Sigma$ $$\sqrt{\displaystyle\sum_{i=1}^{N_d}\norm{s_i(x)}^2}\sim \frac{\sqrt{d}}{\sqrt{\pi}} \hspace{2mm} \textrm{and} \hspace{2mm}  \sqrt{\displaystyle\sum_{j=1}^{N_d}\norm{\nabla s_j(x)}^2}\sim \frac{d}{\sqrt{\pi}}$$ so that the last expression in (\ref{suite}) is bigger than $e^{2\epsilon_d d}$ if    $\displaystyle\sum_{i,j}|a_ib_j-a_jb_i|^2>\pi e^{2\epsilon_d d}d^{-\frac{3}{2}}$ and this holds if $N_d\cdot\max_{i,j}|a_ib_j-a_jb_i|^2>\pi e^{2\epsilon_d d}d^{-\frac{3}{2}}$.
 We then have  $$\big\{\sup_{x\in \Sigma}\norm{(\alpha\otimes\nabla\beta-\beta\otimes\nabla\alpha)(x)}\geq e^{\epsilon d}\big\}\subseteq \big\{\max_{i,j}|a_ib_j-a_jb_i|^2>\pi e^{2\epsilon_d d}d^{-\frac{3}{2}}N_d^{-1}\big\}\subseteq \big\{\max_{i,j}|a_ib_j-a_jb_i|>e^{\epsilon_d d}\big\}.$$
 We then have that  
 \begin{equation}\label{confronto}
 \gamma_{\mathcal{F}^d\otimes \mathcal{E}}\big\{(\alpha,\beta), \sup_{x\in \Sigma}\norm{W_{\alpha\beta}(x)}\geq e^{\epsilon_d d}\big\}\leq\gamma_{\mathcal{L}}\big\{\max_{i,j}|a_ib_j-a_jb_i|>e^{\epsilon_d d}\big\}\leq d^2\cdot\gamma_{\mathcal{F}^d\otimes \mathcal{E}}\big\{|a_ib_j-a_jb_i|>e^{\epsilon_d d}\big\}.
  \end{equation}
 We then estimate the last measure in (\ref{confronto}). We write $a=(a_i,a_j)\in\C^2$ and $b=(b_i,b_j)$, for any $i,j\in\{1,\dots, N_d\}$.
 By Cauchy-Schwarz we have $(a_ib_j-a_jb_i)^2\leq (|a_i|^2+|a_j|^2)(|b_i|^2+|b_j|^2)$ so that 
 $$\gamma_{\mathcal{L}}\big\{|a_ib_j-a_jb_i|>e^{\epsilon_d d}\big\}\leq\frac{1}{\pi^4}\int_{|a|\cdot|b|> e^{\epsilon_d d}}e^{-|a|^2-|b|^2}\textrm{d}a\textrm{d}\bar{a}\textrm{d}b\textrm{d}\bar{b}=\frac{2}{\pi^4}\int_{\substack{|a|\cdot|b|> e^{\epsilon_d d} \\ |a|>|b|}}e^{-|a|^2-|b|^2}\textrm{d}a\textrm{d}\bar{a}\textrm{d}b\textrm{d}\bar{b}$$
$$ \leq \frac{2}{\pi^4}\int_{|a|> e^{\frac{\epsilon_d}{2} d}}\int_{b\in \C^2}e^{-|a|^2-|b|^2}\textrm{d}a\textrm{d}\bar{a}\textrm{d}b\textrm{d}\bar{b}=\frac{1}{\pi^2}\int_{|a|> e^{\frac{\epsilon_d}{2} d}}e^{-|a|^2}\textrm{d}a\textrm{d}\bar{a}$$
$$\leq \frac{1}{\pi^2}e^{\frac{-e^{\frac{\epsilon_d}{2} d}}{2}}\int_{a\in\C^2}e^{-\frac{|a|^2}{2}}\textrm{d}a\textrm{d}\bar{a}=4e^{\frac{-e^{\frac{\epsilon_d}{2} d}}{2}}.$$
Combining the last estimate with (\ref{confronto}) we have the result.
\end{proof}

\begin{prop}\label{below} For any sequences $\epsilon_d$ of positive real numbers  and any $x\in\Sigma$, we have 
$$\gamma_{\mathcal{F}^d\otimes \mathcal{E}}\big\{(\alpha,\beta), \norm{W_{\alpha\beta}(x)}\leq e^{-\epsilon_d d}\big\}\leq \exp(-\frac{\epsilon_d}{2} d).$$
Here, $\gamma_{\mathcal{F}^d\otimes \mathcal{E}}$ is the Gaussian measure on $H^0(\Sigma,\mathcal{F}^d\otimes \mathcal{E})^2$  constructed in Section \ref{gaus}.
\end{prop}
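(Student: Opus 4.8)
The plan is to estimate the probability that $\norm{W_{\alpha\beta}(x)}$ is small by projecting the pair $(\alpha,\beta)$ onto a two-dimensional subspace spanned by the peak sections $\sigma_0$ and $\sigma_1$ at $x$ from Proposition \ref{peak}. Write $\alpha = a_0\sigma_0 + a_1\sigma_1 + \alpha'$ and $\beta = b_0\sigma_0 + b_1\sigma_1 + \beta'$, where $\alpha',\beta'$ lie in the orthogonal complement of $\mathrm{span}(\sigma_0,\sigma_1)$. Under the Gaussian measure $\gamma_{\mathcal{F}^d\otimes\mathcal{E}}$, the coordinates $a_0,a_1,b_0,b_1$ are independent standard complex Gaussians, independent of $(\alpha',\beta')$. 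The key geometric point is that $W_{\alpha\beta}(x) = \alpha(x)\otimes\nabla\beta(x) - \beta(x)\otimes\nabla\alpha(x)$ depends, up to controlled terms, mainly on these four coordinates: by construction of the peak sections, $\sigma_0(x)$ generates the fiber while $\sigma_1(x)=0$ and $\nabla\sigma_1(x)$ generates $(\mathcal{F}^d\otimes\mathcal{E})_x\otimes T^*_\Sigma$ after quotienting by $\sigma_0$, with $\norm{\sigma_0(x)}^2\sim d/\pi$ and $\norm{\nabla\sigma_1(x)}^2\sim d^2/\pi$. The remaining sections $\alpha',\beta'$ contribute values and derivatives at $x$ that are $O(1)$ relative to $\sigma_0$ and $\nabla\sigma_1$ respectively (this follows from the same Bergman-kernel bookkeeping as in Proposition \ref{peak}, since $\sum\norm{\sigma_i(x)}^2$ and $\sum\norm{\nabla\sigma_i(x)}^2$ are essentially exhausted by the peak sections).

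First I would make this precise: conditioning on $(\alpha',\beta')$, the value $\alpha(x)$ is an affine function of $(a_0,a_1)$ of the form $a_0\sigma_0(x) + c_\alpha$ where $c_\alpha=\alpha'(x)$ is a fixed vector, and similarly $\nabla\alpha(x) = a_1\nabla\sigma_1(x) + a_0\nabla\sigma_0(x) + \nabla\alpha'(x)$. Plugging into the Wronskian, the dominant term in $W_{\alpha\beta}(x)$ is $(a_0 b_1 - a_1 b_0)\,\sigma_0(x)\otimes\nabla\sigma_1(x)$, whose norm is $\sim |a_0b_1-a_1b_0|\cdot d^{3/2}/\pi$, plus lower-order contributions involving $\nabla\sigma_0(x) = O(1)$ and the primed pieces. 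The idea is that $\norm{W_{\alpha\beta}(x)} \leq e^{-\epsilon_d d}$ forces $|a_0b_1 - a_1b_0|$ to be exponentially small (roughly $\leq e^{-\epsilon_d d}d^{-3/2}\cdot\text{const}$, after absorbing the lower-order terms, which are only polynomially large in $d$ on an overwhelmingly likely event — one can intersect with the event that $\norm{\alpha'},\norm{\beta'}$ and the relevant Gaussian coordinates are polynomially bounded, whose complement has super-exponentially small measure). So the problem reduces to bounding $\gamma\{|a_0b_1-a_1b_0| \leq \delta_d\}$ for $\delta_d$ of size $e^{-\epsilon_d d}$ times a polynomial factor.

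Then I would compute the anti-concentration of the quadratic form $a_0b_1 - a_1b_0$ in four independent standard complex Gaussians. Conditioning on $(b_0,b_1)=(r_0e^{i\theta_0}, r_1e^{i\theta_1})$, the quantity $a_0 b_1 - a_1 b_0$ is a complex Gaussian with variance $|b_0|^2 + |b_1|^2 = r_0^2 + r_1^2$, so $\gamma\{|a_0b_1-a_1b_0|\leq\delta \mid b\} = 1 - e^{-\delta^2/(r_0^2+r_1^2)} \leq \delta^2/(r_0^2+r_1^2)$. Integrating against the law of $(b_0,b_1)$: with $s = r_0^2+r_1^2$ having an exponential (Gamma) distribution $s\,e^{-s}\,ds$ on $[0,\infty)$, we get $\int_0^\infty \min(1,\delta^2/s)\, s\, e^{-s}\, ds$, which is $O(\delta^2\log(1/\delta))$ — in particular $O(\delta)$ for small $\delta$. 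With $\delta_d \asymp e^{-\epsilon_d d}\cdot\mathrm{poly}(d)$, this yields a bound of the form $\exp(-\epsilon_d d + O(\log d)) \leq \exp(-\tfrac{\epsilon_d}{2}d)$ for $d$ large (using $\epsilon_d = O(d^{-a})$ with $a<1$, so $\epsilon_d d \to \infty$ faster than $\log d$), giving the stated inequality after adjusting constants.

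The main obstacle I expect is the careful handling of the lower-order terms in the Wronskian: one must verify that the contributions of $\alpha'(x), \beta'(x), \nabla\alpha'(x), \nabla\beta'(x)$ and of $\nabla\sigma_0(x)$ genuinely cannot conspire to make $\norm{W_{\alpha\beta}(x)}$ small \emph{without} forcing $|a_0b_1-a_1b_0|$ to be small — in other words, that there is no cancellation mechanism between the leading term and the correction terms that is likely under the Gaussian measure. The clean way to do this is to work in a trivialization where $\sigma_0(x)\otimes\nabla\sigma_1(x)$ is a unit frame of $(\mathcal{F}^d\otimes\mathcal{E})^2\otimes T^*_\Sigma$ at $x$, express $W_{\alpha\beta}(x)/(\sigma_0(x)\otimes\nabla\sigma_1(x))$ as a scalar which is $(a_0b_1-a_1b_0) + (\text{terms of relative size } O(d^{-1/2}))$ times the appropriate Gaussian/primed data, and restrict to the high-probability event where all these data are $\leq d$ in norm; on that event $\norm{W_{\alpha\beta}(x)} \leq e^{-\epsilon_d d}$ implies $|a_0 b_1 - a_1 b_0| \leq e^{-\epsilon_d d}\cdot\mathrm{poly}(d)$, and the anti-concentration computation finishes the argument.
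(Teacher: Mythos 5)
Your overall strategy is the paper's: decompose $(\alpha,\beta)$ along the peak sections $\sigma_0,\sigma_1$ at $x$ and reduce to anti-concentration of the determinant $a_0b_1-a_1b_0$ of four independent complex Gaussians (your conditional-Gaussian computation of that probability is fine, and even a bit slicker than the paper's polar-coordinate estimate). But the step you yourself flag as ``the main obstacle'' is a genuine gap, and the workaround you propose does not close it. You treat $\alpha'(x),\beta'(x),\nabla\alpha'(x),\nabla\beta'(x)$ and the $\nabla\sigma_0(x)$ cross terms as additive corrections of relative size $O(d^{-1/2})$, and then claim that on an event where all Gaussian data are polynomially bounded, $\norm{W_{\alpha\beta}(x)}\leq e^{-\epsilon_d d}$ forces $|a_0b_1-a_1b_0|\leq e^{-\epsilon_d d}\cdot\mathrm{poly}(d)$. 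That implication is false as stated: if the scalar were $(a_0b_1-a_1b_0)+c$ with $c$ only polynomially bounded (not exponentially small), exponential smallness of the sum says $a_0b_1-a_1b_0\approx -c$, which need not be small; nothing in the restriction to a polynomially bounded event rules out this cancellation. Moreover the $\mathrm{poly}(d)$ losses make you invoke $\epsilon_d d\gg\log d$, which is not available here: the proposition is stated for an arbitrary positive sequence $\epsilon_d$, and even the hypothesis $\epsilon_d=O(d^{-a})$ elsewhere in the paper is only an upper bound, so $\epsilon_d d$ may stay bounded.

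The missing observation --- which is exactly how the paper makes the argument work --- is that there are no correction terms at all. By construction, $\sigma_0$ spans the orthogonal complement of $\ker ev_x$ and $\sigma_1$ spans the orthogonal complement of $\ker j^1_x$ inside $\ker ev_x$; hence any $\alpha'$ orthogonal to both $\sigma_0$ and $\sigma_1$ lies in $\ker j^1_x$, i.e.\ $\alpha'(x)=0$ \emph{and} $\nabla\alpha'(x)=0$, and likewise for $\beta'$. Also $\sigma_1(x)=0$, and the terms involving $\nabla\sigma_0(x)$ cancel identically by the antisymmetry of the Wronskian (they appear with coefficient $a_0b_0-b_0a_0$). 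Therefore one has the exact identity $W_{\alpha\beta}(x)=(a_0b_1-b_0a_1)\,\sigma_0(x)\otimes\nabla\sigma_1(x)$, and since $\norm{\sigma_0(x)\otimes\nabla\sigma_1(x)}\sim d^{3/2}/\sqrt{\pi}\cdot\pi^{-1/2}\geq 1$ for $d$ large, the event $\{\norm{W_{\alpha\beta}(x)}\leq e^{-\epsilon_d d}\}$ is contained in $\{|a_0b_1-b_0a_1|\leq e^{-\frac{\epsilon_d}{2}d}\}$ with no exceptional set and no polynomial factors; your anti-concentration bound then finishes the proof for every positive sequence $\epsilon_d$. So the fix is not a more careful bookkeeping of error terms but the structural vanishing built into the definition of the peak sections.
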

\begin{proof}
Let $\sigma_0$ and $\sigma_1$ be the first two peak sections at $x$, as in Proposition \ref{peak}. Recall that we have the estimates  $\norm{\sigma_0(x)}\sim \frac{\sqrt{d}}{\sqrt{\pi}}$ and $\norm{\nabla\sigma_1(x)}\sim \frac{d}{\sqrt{\pi}}$. We write $\alpha=a_0\sigma_0+a_1\sigma_1 +\tau$ and $\beta=b_0\sigma_0+b_1\sigma_1 +\tau'$ where $\tau,\tau'\in\ker J^1_x$, that is $\tau(x)=\tau'(x)=0$ and $\nabla\tau(x)=\nabla\tau'(x)=0$. In particular, we have $W_{\alpha\beta}(x)\doteqdot(\alpha\otimes\nabla\beta-\beta\otimes\nabla\alpha)(x)=(a_0b_1-b_0a_1) \sigma_0(x)\otimes\nabla\sigma_1(x)$. We then have the following inclusion
$$\big\{(\alpha,\beta), \norm{W_{\alpha\beta}(x)}\leq e^{-\epsilon_d d}\big\}\subseteq \big\{(a_0\sigma_0+a_1\sigma_1 +\tau,b_0\sigma_0+b_1\sigma_1 +\tau'), |a_0b_1-b_0a_1|\leq e^{-\frac{\epsilon_d}{2} d}\big\}.$$
Now, the Gaussian measure of the last set equals
 \begin{equation}\label{measurega}
 \gamma\big\{|a_0b_1-b_0a_1|\leq e^{-\frac{\epsilon_d}{2} d}\big\}=\frac{1}{\pi^4}\int_{|a_0b_1-b_0a_1|\leq e^{-{\epsilon_d}{2} d}} e^{-|a_0|^2-|a_1|^2-|b_0|^2-|b_1|^2}\textrm{d}a\textrm{d}\bar{a}\textrm{d}b\textrm{d}\bar{b}.
 \end{equation}
 For any $a=(a_0,a_1)$ we make an unitary trasformation of $\C^2$ (of coordinates $b_0,b_1$) by sending the vector $(1,0)$ to $v_a=\frac{1}{\sqrt{|a_0|^2+|a_1|^2}}(a_0,a_1)$ and the vector $(0,1)$ to $w_a=\frac{1}{\sqrt{|a_0|^2+|a_1|^2}}(-\bar{a}_1,\bar{a}_0).$ We will write any vector of $\C^2$ as a sum $tv_a+sw_a $ with $s,t\in\C$. In these coordinates, the condition $\{(b_0,b_1)\in\C^2, |a_0b_1-b_0a_1|\leq e^{-\frac{\epsilon_d}{2} d}\}$ reads $\{(s,t)\in \C^2, |s|\cdot\norm{a}\leq e^{-\frac{\epsilon_d}{2} d}\}$.  The measure appearing in Equation (\ref{measurega}) is then equal to 
 $$\int_{a\in\C^2}\big(\int_{\substack{(t,s)\in\C^2 \\ |s|\cdot\norm{a}<e^{-\frac{\epsilon_d}{2} d}}}\frac{e^{-|t|^2-|s|^2}}{\pi^2}\textrm{d}t\textrm{d}\bar{t}\textrm{d}s\textrm{d}\bar{s}\big)\textrm{d}\gamma(a)=\int_{a\in\C^2}\big(\int_{\substack{s\in\C \\ |s|\cdot\norm{a}<e^{-\frac{\epsilon_d}{2} d}}}\frac{e^{-|s|^2}}{\pi }\textrm{d}s\textrm{d}\bar{s}\big)\textrm{d}\gamma(a).$$
 where $\textrm{d}\gamma(a)=\frac{1}{\pi^2}e^{-|a|^2}\textrm{d}a\textrm{d}\bar{a}$. \\
 We pass to polar coordinates $a=(r,\theta)$ with $\theta\in S^3$ and $s=\rho e^{i\varphi}$ with $\varphi\in S^1$. We then have $$\frac{2\Vol(S^3)}{\sqrt{\pi}^{5}}\int_{r=0}^{\infty}\big(\int_{\substack{\rho\in\R_+ \\ \rho\cdot r<e^{-\frac{\epsilon}{2} d}}}e^{-\rho^2}\rho \textrm{d}\rho\big)r^3e^{-r^2}\textrm{d}r=$$
 \begin{equation}\label{sommebelow}
 \frac{2\Vol(S^3)}{\sqrt{\pi}^{5}}\int_{r=e^{-\frac{\epsilon_d}{4}d}}^{\infty}\big(\int_{\rho=0}^{\frac{e^{-\frac{\epsilon_d}{2} d}}{r}}e^{-\rho^2}\rho \textrm{d}\rho\big)r^3e^{-r^2}\textrm{d}r+\frac{2\Vol(S^3)}{\sqrt{\pi}^{5}}\int_{r=0}^{e^{-\frac{\epsilon_d}{4}d}}\big(\int_{\rho=0}^{\frac{e^{-\frac{\epsilon_d}{2} d}}{r}}e^{-\rho^2}\rho \textrm{d}\rho\big)r^3e^{-r^2}\textrm{d}r
 \end{equation}
 The first term of the sum in (\ref{sommebelow}) is smaller than
 $$\frac{2\Vol(S^3)}{\sqrt{\pi}^{5}}\int_{r=0}^{\infty}\big(\int_{\rho=0}^{e^{-\frac{\epsilon_d}{4}d}}e^{-\rho^2}\rho \textrm{d}\rho\big)r^3e^{-r^2}\textrm{d}r=\frac{2}{\sqrt{\pi}}\int_{\rho=0}^{e^{-\frac{\epsilon_d}{4}d}}e^{-\rho^2}\rho \textrm{d}\rho$$
 $$<\frac{2}{\sqrt{\pi}}\int_{\rho=0}^{e^{-\frac{\epsilon_d}{4}d}}\rho \textrm{d}\rho=\frac{1}{\sqrt{\pi}}e^{-\frac{\epsilon_d}{2} d}$$
 The second term of the sum in (\ref{sommebelow}) is smaller than
 $$\frac{2\Vol(S^3)}{\sqrt{\pi}^{5}}\int_{r=0}^{e^{-\frac{\epsilon_d}{4}d}}\big(\int_{\rho=0}^{\infty}e^{-\rho^2}\rho \textrm{d}\rho\big)r^3e^{-r^2}\textrm{d}r=\frac{\Vol(S^3)}{\pi^4}\int_{r=0}^{e^{-\frac{\epsilon_d}{4}d}}r^3e^{-r^2}\textrm{d}r$$
 $$<\frac{\Vol(S^3)}{\pi^4}\int_{r=0}^{e^{-\frac{\epsilon_d}{4}d}}r^3\textrm{d}r=\frac{\Vol(S^3)}{4\pi^4}e^{-\epsilon_d d}.$$
 We then obtain that the measure (\ref{measurega}) is smaller than $\frac{1}{\sqrt{\pi}}e^{-\frac{\epsilon_d}{2} d}+\frac{\Vol(S^3)}{4\pi^4}e^{-\epsilon_d d}<e^{-\frac{\epsilon_d}{2} d}$.
\end{proof}

\begin{proof}[Proof of Proposition \ref{fondame}] 
Let us fix some notations. For any $r>0$, we denote the circle of radiur $r$ by $S(r)=\{|z|=r\}$  and the ball of radius $r$ by $B(r)$. 
Finally, we denote by $\log^+t=\max(\log t, 0)$ and $\log^-t=\max(-\log t, 0)$ so that $\log t=\log^+t-\log^-t$ and $|\log t|=\log^+t+\log^-t$.

By Proposition \ref{above}, we get 
\begin{equation}\label{piu}
\gamma_{\mathcal{F}^d\otimes \mathcal{E}}\big\{(\alpha,\beta), \int_{\Sigma}\log^+ \norm{W_{\alpha\beta}(x)} \geq \epsilon_d d\big\}\leq \exp(-C\epsilon_dd)
\end{equation}
so that we have to prove the following bound
\begin{equation}\label{meno}
\gamma_{\mathcal{F}^d\otimes \mathcal{E}}\big\{(\alpha,\beta), \int_{\Sigma}\log^- \norm{W_{\alpha\beta}(x)}\geq \epsilon_d d\big\}\leq \exp(-C\epsilon_d d).
\end{equation}
In order to prove (\ref{meno}), let us cover $\Sigma$ by a finite number of annuli $A_1,\dots,A_n$, each of which  is included in a coordinate chart. We can suppose that each annulus, read in these coordinates, is of the form  $B(3)\setminus B(1)$. We fix a
 holomorphic trivializations $e_{\mathcal{F}}$ and $e_{\mathcal{E}}$ of $\mathcal{F}$ and $\mathcal{E}$ over each coordinate chart and then over each annulus.
We make the following:\\ 

\textbf{Claim:} \emph{ For any sequence $\epsilon_d$ of positive real numbers, there exists a positive constant $C$ and a measurable set $E$ with $\gamma_{\mathcal{F}^d\otimes \mathcal{E}}(E)<e^{-C{\epsilon_d}d}$, such that \begin{equation}\label{key} \int_{S(r)} \log^- \norm{W_{\alpha\beta}} \textrm{d}\sigma_r\leq \epsilon_d d
\end{equation} 
for $(\alpha,\beta)\in H^0(\Sigma,\mathcal{F}^d\otimes \mathcal{E})^2\setminus E$, $r\in [1,3]$, $d\gg 0$.
Here, all the computations are done in the coordinate chart   and  $\sigma_r$ is the invariant probability measure on the circle $S(r)=\{ |z|=r\}$.}\\

Before proving the Claim, we end the proof of Proposition \ref{fondame}.
Since the exceptional set $E$ is independent of the radius $r\in [1,3]$,  we can integrate the inequality (\ref{key}) over $r\in [1,3]$ and we get  
\begin{equation}\label{inega}
\int_{B(3)\setminus B(1)} \log^- \norm{W_{\alpha\beta}} \textrm{d}\sigma_r \textrm{d}r\leq M\epsilon_d d
\end{equation} for some $M>0$ (independent of $\epsilon_d$) and any $(\alpha,\beta)\in H^0(\Sigma,\mathcal{F}^d\otimes \mathcal{E})^2\setminus E$.  By summing over the annuli the inequality (\ref{inega}) we get (\ref{meno}) which, together with (\ref{piu}), concludes the proof of the proposition.\\

 We now prove the Claim. The proof follows the lines of \cite[Lemma 4.1]{over}.\\
 Let us fix some notations. 
 We write $A(\epsilon_d, r) \lesssim B(\epsilon_d, r)$ if for any sequence $\epsilon_d$, there exists a constant $C>0$ and a set $E\subset H^0(\Sigma,\mathcal{F}^d\otimes \mathcal{E})^2$ of Gaussian measure smaller than $\exp(-C{\epsilon_d d})$ such that $A(\epsilon_d, r) \leq B(\epsilon_d, r)$ for any $(\alpha,\beta)\in H^0(\Sigma,\mathcal{F}^d\otimes \mathcal{E})^2\setminus E$ and any $r\in [1,3]$.\\
 Write $\alpha=f\cdot e_{\mathcal{F}}^d\otimes e_{\mathcal{E}}$ and $\beta=g\cdot e_{\mathcal{F}}^d\otimes e_{\mathcal{E}}$ so that $\alpha\otimes\nabla\beta-\beta\otimes\nabla\alpha=(fg'-gf')e_{\mathcal{F}}^{2d}\otimes e_{\mathcal{E}}^2\otimes\textrm{d}x$. Here $e_{\mathcal{F}}$ and $e_{\mathcal{E}}$ are local holomorphic trivializations of $\mathcal{F}$ and $\mathcal{E}$ over $U$.  In particular, this shows that the potential $\phi_d$  of the line bundle $\mathcal{F}^{2d}\otimes\mathcal{E}^2\otimes T^*_{\Sigma}$ is $2d\cdot\log\norm{e_{\mathcal{F}}}+O(1)=O(d)$. \\ 
 Finally, we will denote by $C_i$, for $i\in\mathbb{N}$, any constant which does \emph{not} depend on $\epsilon_d$ and $r$. \\
 
\textbf{Step 1:} We claim that 
\begin{equation}\label{claim1}
\int_{S(r)}\big|\log |fg'-gf'|\big|d\sigma_r\lesssim C_1d.
\end{equation}
We will use the identity $|\log t|=\log^+t+\log^-t$ and we treat separately  $\log^+$ and $\log^-$.\\
Write $\alpha=f\cdot e_{\mathcal{F}}^d\otimes e_{\mathcal{E}}$ and $\beta=g\cdot e_{\mathcal{F}}^d\otimes e_{\mathcal{E}}$, and then $\alpha\otimes\nabla\beta-\beta\otimes\nabla\alpha=(fg'-gf') e_{\mathcal{F}}^{2d}\otimes e_{\mathcal{E}}^2\otimes\textrm{d}x$, so that we have

\begin{equation}\label{poincare}
\log \norm{\alpha\otimes\nabla\beta-\beta\otimes\nabla\alpha}=\log |fg'-gf'|+\phi_d 
\end{equation}

Now, $\int_{S(r)}|\phi_d|d\sigma_r =O(d)$ and, by Proposition \ref{above}, we also have $\int_{S(r)}\log^+\norm{\alpha\otimes\nabla\beta-\beta\otimes\nabla\alpha}d\sigma_r\lesssim \epsilon d$ so that by (\ref{poincare}) we have

\begin{equation}\label{plus} \int_{S(r)}\log^+|fg'-gf'|d\sigma_r\leq \int_{S(r)}\log^+\norm{\alpha\otimes\nabla\beta-\beta\otimes\nabla\alpha}d\sigma_r+\int_{S(r)}|\phi_d|d\sigma_r\lesssim C_2 d.
\end{equation}
We now estimate the $\log^-$ part.
By Proposition \ref{below}, we know that $\log\norm{(\alpha\otimes\nabla\beta-\beta\otimes\nabla\alpha)(x_0)}\gtrsim -d$ and then, by (\ref{poincare}), we  get \begin{equation}\label{pointsub}
\log|(fg'-gf')(x_0)|\gtrsim -C_3 d.
\end{equation}
We denote by $P_r(x,z)=\frac{r-|x|^2}{|z-x|^2}$ the Poisson kernel on the ball of radius $r$. Using the identity $|\log t|=\log^+t+\log^-t$ and the fact that $\log |fg'-gf'|$ is subharmonic, we get
\begin{equation}\label{subha}
\log |(fg'-gf')(x_0)|+\int_{S(r)}P_r(x_0,z)\log^-|fg'-gf'(z)|d\sigma_r(z)\leq \int_{S(r)}P_r(x_0,z)\log^+|fg'-gf'(z)|\textrm{d}\sigma_r(z).
\end{equation}
By continuity of $P_r(x_0,z)$, we can find two positive constants $M,m$ such that $m\leq P_r(x_0,z)\leq M$ for any $|z|=r\in [\frac{1}{2},3]$. Then, by (\ref{subha}), we get
$$m\cdot\int_{S(r)}\log^-|fg'-gf'(z)|d\sigma_r(z)\leq M\cdot\int_{S(r)}\log^+|fg'-gf'(z)|d\sigma_r(z)-\log |(fg'-gf')(x_0)|.$$
Using the last inequality together with (\ref{plus}) and (\ref{pointsub}), we prove  (\ref{claim1}).\\

\textbf{Step 2:} We cover the unit circle $S(1)$ by a family of disjoint union of intervals $I^d_1,\dots,I^d_{q_d}$ of length smaller than $\epsilon_d^4$ and denote by $m^d_k=\sigma_1(I^d_k)$ the length of each interval so that $\sum_{k=1}^{q_d}m^d_k=1$. Take $r'$ such that $|r-\epsilon_d-r'|<\frac{1}{2}\epsilon_d^4$.
By Proposition \ref{below}, we can choose  points $x_k\in r'I^d_k$ such that, for any $k\in\{1,\dots,q_d\}$,  we have $\log\norm{\alpha\otimes\nabla\beta-\beta\otimes\nabla\alpha(x_k)}>-\epsilon_d d$, unless $(\alpha,\beta)$ lies in a set $E$ of measure smaller than $\sum_{k=1}^{q_d}e^{-C\epsilon_d d}$. Now, this measure is smaller than $e^{-C'\epsilon_d d}$, for some $C'<C$, as $q_d\sim \epsilon_d^{-4}$.\\
By the choice of $x_k$ and $I^d_k$ we have that if $z\in rI^d_k$ then $|z-x_k|<\epsilon_d+\frac{3}{2}\epsilon_d^4<2\epsilon_d$.
In particular we get 
\begin{equation}\label{rolle} \int_{S(r)}|\phi_d|\textrm{d}\sigma_r=\sum_{k=1}^q\int_{rI^d_k}|\phi_d|\textrm{d}\sigma_r\geq \sum_{k=1}^{q_d}m_k\phi_d(x_{k})-2\epsilon_d\sup |\textrm{d}\phi_d|
\end{equation}
where $\textrm{d}\phi_d$ is the differential of $\phi_d$, which is a $O(d)$. 
By the choice of $x_k$ and $I^d_k$ and since the function $\log|fg'-gf'(z)|$ is subharmonic, we can use \cite[Equation (29)]{over} to find  a positive $K>0$ (which does not depend  on $r$ and $\epsilon_d$) such that 
\begin{equation}\label{keyineq} 
\int_{S(r)}\log|fg'-gf'(z)|\textrm{d}\sigma_r(z)\geq\sum_{k=1}^{q_d}m_k\log|fg'-gf'(
x_{k})|-K\epsilon_d \int_{S(r)}\log|fg'-gf'(z)|\textrm{d}\sigma_r(z)
\end{equation}
Using first (\ref{poincare}) and then (\ref{rolle})-(\ref{keyineq}) we get
$$\int_{S(r)}\log\norm{\alpha\otimes\nabla\beta-\beta\otimes\nabla\alpha}\textrm{d}\sigma_r=\int_{S(r)}\log|fg'-gf'(z)|\textrm{d}\sigma_r+\int_{S(r)}\phi_d \textrm{d}\sigma_r$$
\begin{equation}\label{lastbefore}
\geq \sum_{k=1}^{q_d} m_k \log \norm{(\alpha\otimes\nabla\beta-\beta\otimes\nabla\alpha)(x_k)}-K\epsilon_d \int_{S(r)}\log|fg'-gf'(z)|\textrm{d}\sigma_r(z)- \epsilon_d \cdot \sup |\textrm{d}\phi_d|
\end{equation}
where $\textrm{d}\phi_d$ is the differential of $\phi_d$.
By (\ref{claim1}),  the choice of $x_k$ and (\ref{lastbefore}), we have that
\begin{equation}\label{finally}
\int_{S(r)}-\log\norm{\alpha\otimes\nabla\beta-\beta\otimes\nabla\alpha}\textrm{d}\sigma_r\lesssim \epsilon_d d +K\epsilon_d C_1 d+\epsilon_d\sup |\textrm{d}\phi_d| =C_4\epsilon_d d
\end{equation}
Using the identity $\log t=\log^+t-\log^-t$,  Equations (\ref{finally})  and Proposition  \ref{above}, we finally get
$$\int_{S(r)}\log^-\norm{W_{\alpha\beta}}\textrm{d}\sigma_r=\int_{S(r)}-\log\norm{W_{\alpha\beta}}\textrm{d}\sigma_r+\int_{S(r)}\log^+\norm{W_{\alpha\beta}}\textrm{d}\sigma_r\lesssim C_4\epsilon_d d+\epsilon_d d$$
which ends the proof of the Claim.
\end{proof}
\begin{oss}\label{notdep} Following the proof we can see that, for the case of  constant sequence $\epsilon_d\equiv \epsilon$,  the constant $C$ in the statement of Proposition \ref{fondame} is  independent of $\epsilon$.
\end{oss}
\section{Proofs of the main theorems}\label{prove}
In this section we will prove Theorems \ref{smooth}, \ref{number} and \ref{hole}. We will follow the notations of Section \ref{ramframwork}.
We fix a degree $1$ line bundle $\mathcal{F}$ over $\Sigma$, so that for any $\mathcal{L}\in\Pic^d(\Sigma)$ there exists an unique $\mathcal{E}\in \Pic_0(\Sigma)$ such that $\mathcal{L}=\mathcal{F}^d\otimes \mathcal{E}$. We denote by $u_{\alpha\beta}$ the branched covering defined by a pair  $(\alpha,\beta)$ of  global sections  of $\mathcal{F}^d\otimes \mathcal{E}$ without common zeros.
A \emph{critical point} of $u_{\alpha\beta}$ is a point $x\in\Sigma$ such that $\textrm{d}u_{\alpha\beta}(x)=0$. By Proposition \ref{bonnedef}, this is equivalent to the fact that that $x$ is a zero of the Wronskian $W_{\alpha\beta}\doteqdot\alpha\otimes\nabla\beta-\beta\otimes\nabla\alpha\in H^0(\Sigma;T^*_{\Sigma}\otimes \mathcal{F}^{2d}\otimes \mathcal{E})$, where $T^*_{\Sigma}$ is the cotangent bundle  of $\Sigma$.
For any pair  $(\alpha,\beta)$ of  global sections of $\mathcal{F}^d\otimes \mathcal{E}$, we denote by $$T_{\alpha\beta}=\frac{1}{2d+2g-2}\displaystyle\sum_{x\in\Crit(u_{\alpha\beta})}\delta_x$$  the empirical probability measure on the  critical points of $u_{\alpha\beta}$. Here, $\delta_x$ is the Dirac measure at $x$.
Finally, we will denote by $\norm{\cdot}$ any norm induced by the Hermitian metric $h$ on $\mathcal{L}$ given by Proposition \ref{metr}.

\begin{thm}\label{smoothfiber} Let $\Sigma$ be a  Riemann surface equipped with a  volume form $\omega$ of mass $1$ and $\mathcal{E}\in\Pic^0(\Sigma)$. For every smooth function  $f\in \mathcal{C}^{\infty}(\Sigma,\R)$, any degree $1$ line bundle $\mathcal{F}$ and any sequence $\epsilon_d\in\R_+$  of the form $\epsilon_d=O(d^{-a})$, for some $a\in [0,1)$, there exists a positive constant $C$ such that $$\mu_{\mathcal{F}^d\otimes\mathcal{E}}\bigg\{u\in\M_d(\Sigma,\mathcal{F}^d\otimes\mathcal{E}), \big|T_u(f)-\int_{\Sigma}f\omega\big|\geq\epsilon_d \bigg\}\leq \exp(-C\epsilon_dd).$$
\end{thm}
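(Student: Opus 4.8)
The plan is to reduce the statement about $T_u(f)$ to the $L^1$-estimate on $\log\norm{W_{\alpha\beta}}$ provided by Proposition \ref{fondame}, via the Poincar\'e--Lelong formula. First I would write, for a pair $(\alpha,\beta)$ of global sections of $\mathcal{F}^d\otimes\mathcal{E}$ without common zeros, the current of integration on the zero set of the Wronskian $W_{\alpha\beta}\in H^0(\Sigma;\mathcal{F}^{2d}\otimes\mathcal{E}^2\otimes T^*_\Sigma)$. By Proposition \ref{bonnedef} this zero divisor is exactly $\sum_{x\in\Crit(u_{\alpha\beta})}\delta_x$ (counted with multiplicity), so that $(2d+2g-2)T_{\alpha\beta}$ is the zero divisor of $W_{\alpha\beta}$. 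The Poincar\'e--Lelong formula then gives, in the sense of currents,
\begin{equation*}
(2d+2g-2)\,T_{\alpha\beta} = \frac{i}{\pi}\partial\bar\partial\log\norm{W_{\alpha\beta}} + c_1(\mathcal{F}^{2d}\otimes\mathcal{E}^2\otimes T^*_\Sigma, h),
\end{equation*}
where the curvature term equals $\omega_d = 2d\cdot\omega + O(1)$ as recalled before Proposition \ref{fondame}. Pairing with a test function $f$ and dividing by $2d+2g-2$, we obtain
\begin{equation*}
T_{\alpha\beta}(f) - \int_\Sigma f\,\frac{\omega_d}{2d+2g-2} = \frac{1}{2d+2g-2}\cdot\frac{1}{\pi}\int_\Sigma \log\norm{W_{\alpha\beta}}\,\partial\bar\partial f,
\end{equation*}
using that $\partial\bar\partial$ is formally self-adjoint (integration by parts, no boundary since $\Sigma$ is closed). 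The point is that the right-hand side is controlled in absolute value by $\frac{\norm{\partial\bar\partial f}_\infty}{\pi(2d+2g-2)}\int_\Sigma|\log\norm{W_{\alpha\beta}}|$.

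Next I would handle the deterministic error term. Since $\frac{\omega_d}{2d+2g-2} = \omega + O(1/d)$ in a suitable norm, we have $\big|\int_\Sigma f\,\frac{\omega_d}{2d+2g-2} - \int_\Sigma f\omega\big| = O(\norm{f}_\infty/d)$, and because $\epsilon_d = O(d^{-a})$ with $a<1$, this error is $o(\epsilon_d)$ and in particular smaller than $\epsilon_d/2$ for $d$ large. Therefore, for $d\gg 0$,
\begin{equation*}
\Big\{\big|T_{\alpha\beta}(f) - \textstyle\int_\Sigma f\omega\big|\geq \epsilon_d\Big\} \subseteq \Big\{\frac{\norm{\partial\bar\partial f}_\infty}{\pi(2d+2g-2)}\int_\Sigma|\log\norm{W_{\alpha\beta}}| \geq \frac{\epsilon_d}{2}\Big\} \subseteq \Big\{\int_\Sigma|\log\norm{W_{\alpha\beta}}|\geq \epsilon'_d\, d\Big\}
\end{equation*}
for a new sequence $\epsilon'_d = c\,\epsilon_d$ with $c = c(f,\Sigma)>0$, which is again of the form $O(d^{-a})$. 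Finally I would invoke Proposition \ref{fondame} with the sequence $\epsilon'_d$ to bound the Gaussian measure of the last set by $\exp(-C\epsilon'_d d) = \exp(-C'\epsilon_d d)$, and then translate this Gaussian bound into a bound for $\mu_{\mathcal{F}^d\otimes\mathcal{E}}$ using Proposition \ref{vs} (the function $(\alpha,\beta)\mapsto\mathds{1}_{\{|T_{\alpha\beta}(f)-\int f\omega|\geq\epsilon_d\}}$ is invariant under scaling, so its Fubini--Study and Gaussian integrals coincide); this also absorbs the finitely many small $d$ by enlarging the constant. The constant $C$ produced this way is proportional to $1/\norm{\partial\bar\partial f}_\infty$, matching the remark after Theorem \ref{smooth}.

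The main obstacle is making the Poincar\'e--Lelong step fully rigorous: one must check that $W_{\alpha\beta}$ is not identically zero for $\mu_{\mathcal{F}^d\otimes\mathcal{E}}$-almost every $(\alpha,\beta)$ (true since almost every pair defines a genuine branched covering, with $2d+2g-2$ isolated critical points), so that $\log\norm{W_{\alpha\beta}}$ is indeed locally integrable and the current identity holds; and one must verify that the multiplicities of the zero divisor of $W_{\alpha\beta}$ agree with the local degrees of $u_{\alpha\beta}$ at its critical points, so that $T_{\alpha\beta}$ as defined from $\Crit(u)$ matches $\frac{1}{2d+2g-2}\div(W_{\alpha\beta})$ exactly. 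Both are standard facts about branched coverings, but they are where the "quadratic" nature of the defining equation (noted in the introduction) must be reconciled with the linear Poincar\'e--Lelong machinery — here it works because $W_{\alpha\beta}$ is itself a holomorphic section, so its zero set is what the Lelong formula sees, regardless of the fact that $W$ depends quadratically on $(\alpha,\beta)$.
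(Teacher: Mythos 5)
Your proposal is correct and follows essentially the same route as the paper: Poincar\'e--Lelong applied to the Wronskian $W_{\alpha\beta}$ to bound $\big|T_{\alpha\beta}(f)-\frac{1}{2d}\int_\Sigma f\omega_d\big|$ by $\frac{\norm{\partial\bar{\partial}f}_\infty}{2\pi d}\int_\Sigma|\log\norm{W_{\alpha\beta}}|$, absorption of the deterministic $O(1/d)$ discrepancy between $\frac{1}{2d}\omega_d$ and $\omega$ into $\epsilon_d/2$, the large deviations bound of Proposition \ref{fondame}, and Proposition \ref{vs} (scale-invariance of the event) to pass from the Gaussian to the Fubini--Study measure. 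The extra points you flag (local integrability of $\log\norm{W_{\alpha\beta}}$ and the matching of divisor multiplicities with critical multiplicities) are indeed the standard facts implicitly used in the paper.
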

\begin{proof}
 We denote by $\omega_d$ the curvature form of $\mathcal{F}^{2d}\otimes \mathcal{E^2}^{2}\otimes T^*_{\Sigma}$ with respect to the (induced) metric given by Proposition \ref{metr}. Remark  that $\omega_d=2d\cdot\omega+O(1)$ so that
\begin{equation}\label{paragone}
\bigg\{ (\alpha,\beta)\in H^0(\Sigma,\mathcal{F}^d\otimes \mathcal{E})^2,\bigg| T_{\alpha\beta}(f)-\frac{1}{2d}\int_{\Sigma}f\omega_d \bigg|>\frac{\epsilon_d}{2} \bigg\}\supseteq\bigg\{(\alpha,\beta)\in H^0(\Sigma,\mathcal{F}^d\otimes \mathcal{E})^2,\bigg| T_{\alpha\beta}(f)-\int_{\Sigma}f\omega \bigg|>\epsilon_d \bigg\}.
\end{equation}
Remark that these sets are cones in $ H^0(\Sigma,\mathcal{F}^d\otimes \mathcal{E})^2$.  By Proposition \ref{vs}, this implies that the Gaussian measure of these sets equals the Fubini-Study measure of their projectivizations. In order to obtain the result, we will then compute the Gaussian measure of  the cones appearing in (\ref{paragone}).\\
By Poincar\'e-Lelong formula we have 
\begin{equation}\label{es} \bigg| T_{\alpha\beta}(f)-\frac{1}{2d}\int_{\Sigma}f\omega_d \bigg|=\frac{1}{2\pi d}\bigg|\int_{\Sigma}\log \norm{W_{\alpha\beta}}\partial\bar{\partial}f\bigg|
\leq \frac{\norm{\partial\bar{\partial}f}_{\infty}}{2 \pi d}\int_{\Sigma}\big|\log \norm{W_{\alpha\beta}}\big|\cdot\omega.
\end{equation}
The result then follows from the inequality (\ref{es}), the inclusion (\ref{paragone}) and  Proposition \ref{fondame}.
\end{proof}

\begin{proof}[Proof of Theorem \ref{smooth}]
We fix a degree $1$ line bundle $\mathcal{F}$ over $\Sigma$, so that for any $\mathcal{L}\in\Pic^d(\Sigma)$ there exists an unique $\mathcal{E}\in \Pic_0(\Sigma)$ such that $\mathcal{L}=\mathcal{F}^d\otimes \mathcal{E}$. The result then follows by integrating the inequality appearing in Theorem  \ref{smoothfiber} along the compact base $\Pic^0(\Sigma)\simeq \Pic^d(\Sigma)$ (the last isomorphism is given by the choice of the degree $1$ line bundle $\mathcal{F}$).
\end{proof}

\begin{oss}\label{constant}  Following the proof of Theorem \ref{smoothfiber} we see that we have prove a slight more precise result: for  any sequence $\epsilon_d\in\R_+$, there exists a positive constant $C$ such that for every smooth function  $f\in \mathcal{C}^{\infty}(\Sigma,\R)$ we have  $$\mu_d\bigg\{u\in\M_d(\Sigma), \big|T_u(f)-\int_{\Sigma}f\omega\big|>\epsilon_d \bigg\}\leq \exp(-C\frac{\epsilon_d}{\norm{\partial\bar{\partial} f}_{\infty}}d).$$
 Moreover, thanks to Remark \ref{notdep}, for the case of  constant sequence $\epsilon_d\equiv \epsilon$,  the constant $C$ in the statement  is also independent of $\epsilon$.
\end{oss}

\begin{proof}[Proof of Theorem \ref{number}]
Fix  $U\subset\Sigma$ an open set with piecewise $C^2$ boundary. Let $\psi_d^+,\psi_d^-$ be two families of  $\mathcal{C}^2$ functions such that \begin{itemize}
\item $0\leq \psi^-_d\leq\mathds{1}_U\leq \psi^+_d\leq 1$;
\item $ \frac{1}{2}\int_{\Sigma}\psi_d^-\omega\geq \Vol(U)-\frac{\epsilon_d}{2}$;
\item  $\frac{1}{2}\int_{\Sigma}\psi_d^+\omega\leq \Vol(U)+\frac{\epsilon_d}{2};$
\item $\norm{\partial\bar{\partial}\psi_d^+}_{\infty}=O\big(\frac{1}{\epsilon_d^2}\big)$ and  $\norm{\partial\bar{\partial}\psi_d^-}_{\infty}=O\big(\frac{1}{\epsilon_d^2}\big).$
\end{itemize}
These functions can be constructed as follows. Let $\rho: \R\rightarrow [0,1]$ be a smooth function such that $\rho(t)=1$ for $t\geq \frac{1}{3}$ and $\rho(t)=0$ for $t\geq \frac{2}{3}$. Then we define $\psi^+_d(x)=\rho(\frac{1}{\epsilon_d}\textrm{dist}(x,U))$ and $\psi^-_d(x)=1-\rho(\frac{1}{\epsilon_d}\textrm{dist}(x,\Sigma\setminus U))$, which are $\mathcal{C}^2$-functions thanks to the hypothesis on the boundary of $U$.\\ 
By Theorem \ref{smooth} for $f=\psi_d^+$ and by Remark \ref{constant}, there exists a constant $C_+>0$ and a set $E_{2}$ of measure smaller than $e^{-C_2\epsilon_d^3 d}$, such that for $u$ outside $E_+$, we have
$$\#(\Crit(u)\cap U)=T_u(\mathds{1}_U)\leq T_u(\psi_d^-)\leq 2d\int_{\Sigma}\psi_d^-\omega+\epsilon_d d \leq 2d\cdot\Vol(U)+2\epsilon_d d.$$
Using again  Theorem \ref{smooth} and Remark \ref{constant} for $f=\psi_d^-$, we can find $C_->0$ and a set $E_{-}$ of measure smaller than $e^{-C_1\epsilon_d^3 d}$, such that for $u$ outside $E_-$ we get $\#(\Crit(u)\cap U)\geq 2d\cdot\Vol(U)-2\epsilon_d d.$
This shows that, for any sequence $\epsilon_d$ and any $U\subset\Sigma$, there exists a positive constant $C$ (any constant smaller than $\min(C_1,C_2)$) and a set $E$ of measure smaller than $e^{-C\epsilon_d^3 d}$ (the union of $E_+$ and $E_-$), such that for $u$ outside $E$,  $\big|\frac{1}{2d}\#(\Crit(u)\cap U)-\Vol(U)\big|\leq\epsilon_d$,  which proves the theorem.
\end{proof}
\begin{proof}[Proof of Theorem \ref{hole}]
The proof follows the lines of the proof of Theorem \ref{number}. 
Fix  $U\subset\Sigma$ any open set. Let $\psi^+,\psi^-$ be two   smooth functions such that \begin{itemize}
\item $0\leq \psi^-\leq\mathds{1}_U\leq \psi^+\leq 1$;
\item $ \frac{1}{2}\int_{\Sigma}\psi^-\omega\geq \Vol(U)-\frac{\epsilon}{2}$;
\item  $\frac{1}{2}\int_{\Sigma}\psi^+\omega\leq \Vol(U)+\frac{\epsilon}{2};$
\end{itemize}
By Theorem \ref{smooth} for $f=\psi^+$ and by Remark \ref{constant}, there exists a constant $C_+>0$ and a set $E_{2}$ of measure smaller than $e^{-C_2 d}$, such that for $u$ outside $E_+$, we have
$$\#(\Crit(u)\cap U)=T_u(\mathds{1}_U)\leq T_u(\psi_2)\leq 2d\int_{\Sigma}\psi_2\omega+\epsilon d \leq 2d\cdot\Vol(U)+2\epsilon d.$$
Using again  Theorem \ref{smooth} and Remark \ref{constant} for $f=\psi^-$, we can find $C_->0$ and a set $E_{-}$ of measure smaller than $e^{-C_1d}$, such that for $u$ outside $E_-$ we get $\#(\Crit(u)\cap U)\geq 2d\cdot\Vol(U)-2\epsilon d.$
This shows that, for any  $\epsilon>0$ and any $U\subset\Sigma$, there exists a positive constant $C$ and a set $E$ of measure smaller than $e^{-C d}$, such that for $u$ outside $E$,  $\big|\frac{1}{2d}\#(\Crit(u)\cap U)-\Vol(U)\big|\leq\epsilon$. Taking $\epsilon=\Vol(U)$ we have the result.
\end{proof}
 \paragraph*{Acknowledgments.}
I would like to thank  Jean-Yves Welschinger for useful discussions. 
 This work was performed within the framework of the LABEX MILYON (ANR-10-LABX-0070)
of Universit\'e de Lyon, within the program "Investissements d'Avenir"
(ANR-11-IDEX-0007) operated by the French National Research Agency (ANR). 

\bibliographystyle{plain}
\bibliography{biblio}
\end{document}